  \theoremstyle{plain}
  \newtheorem{lem}{Lemma}
  \theoremstyle{remark}
  \newtheorem{rem}{Remark}
 \theoremstyle{definition}
 \newtheorem{defn}{Definition}
  \theoremstyle{remark}
  \newtheorem{claim}{Claim}
  \theoremstyle{plain}
  \newtheorem*{algorithm*}{Algorithm}
\title[Computation of the Selberg Zeta Functions on Hecke Triangle Groups]{Computation of the Selberg Zeta Functions on Hecke Triangle Groups and Transferoperators}
\newcolumntype{d}[1]{D{.}{.}{#1}}
\newcolumntype{f}[1]{D{+}{+}{#1}}
\newcommand{\PSLR}{PSL_2(\ensuremath{\mathbb{R}})}
\newcommand{\PSLZ}{PSL_2(\ensuremath{\mathbb{Z}})}
\newcommand{\SLR}{SL_2(\ensuremath{\mathbb{R}})}
\newcommand{\sgn}{\text{sgn}}
\newcommand{\Tr}{\  \text{Tr} \ }
\renewcommand{\H}{\ensuremath{\mathcal{H}}}
\newcommand{\Fq}{F_{q}}
\newcommand{\RS}[1]{\ensuremath{\left[ \smash{#1}\vphantom{1^{j}} \right] }}
\newcommand{\DS}[1]{\ensuremath{\left[ \smash{#1}\vphantom{1^{j}} \right]^{*} }}
\newcommand{\Hyp}{\ensuremath{\mathcal{H}_q}}
\newcommand{\Hypp}{\ensuremath{\mathcal{H}^{0}_q}}
\newcommand{\PP}{\ensuremath{\mathcal{P}}}
\newcommand{\PPr}{\ensuremath{\mathcal{P}^{r}}}
\newcommand{\np}{\ensuremath{\kappa}}
\newcommand{\SM}{\ensuremath{\mathcal{T}^{1}\mathcal{M}_{q}}}
\newcommand{\Cq}{\ensuremath{\mathfrak{c}_{q}}}
\newcommand{\Cd}{\ensuremath{\mathfrak{c}_{q}^{*}}}
\newcommand{\nlm}[1]{\ensuremath{ \left\{ #1 \right\} _{\lambda}  } }
\newcommand{\RNCF}{\ensuremath{{\mathcal{A}}_{q} }}
\begin{document}

\title{Computation of Selberg zeta functions on Hecke triangle groups }

\author{Fredrik Strömberg}
\address{Fachbereich Mathematik, AGF, TU Darmstadt,
Schlossgartenstr. 7,
64289 Darmstadt, Germany}
\email{stroemberg@mathematik.tu-darmstadt.de}
\subjclass{Primary:11M36  Secondary: 37C30, 37E05,11F72, 37D40}
\keywords{Selberg zeta function, Symbolic dynamics, Transfer operator, Geodesic flow}
\date{April 30, 2008}

\begin{abstract}
In this paper, a heuristic method to compute the Selberg zeta function
for Hecke triangle groups, $G_{q}$ is described. The algorithm is
based on the transfer operator method and an overview of the relevant
background is given.We give numerical support for the
claim that the method works and can be used to compute the Selberg Zeta
function on $G_{q}$ to any desired precision.
 We also present some numerical results obtained by implementing the algorithm. 

\tableofcontents{}
\end{abstract}
\maketitle

\section{Introduction}

The Selberg zeta function, $Z_{\Gamma}\left(s\right)$, for a co-finite
Fuchsian group $\Gamma$ plays an important role in the spectral theory
or harmonic analysis on the corresponding orbifold $\mathcal{M}=\Gamma\backslash\H$,
a surface with constant negative curvature. Selberg's \cite{MR0088511}
motivation to introduce $Z_{\Gamma}\left(s\right)$ was the similarity
between a trace formula he developed (cf. in particular \cite[p.\ 74]{MR0088511}),
now called the \emph{Selberg trace formula} and Weil's explicit formula
\cite{MR0053152}. The role of the Riemann zeta function $\zeta\left(s\right)$
in the latter is analogous to the role of $Z_{\Gamma}\left(s\right)$
in the former. For a more detailed account of this motivation see
Hejhal \cite{hejhal:76} (in particular sections 4-6). Since then
the Selberg trace formula has been worked out in detail for $\PSLR$
(by e.g.~Hejhal \cite{hejhal:lnm548,hejhal:lnm1001}) and the properties
of $Z_{\Gamma}\left(s\right)$ has been extensively studied in many
other contexts. 

Despite the importance of $Z_{\Gamma}\left(s\right)$ and the fact
that one can obtain an abundance of its properties through the Selberg
trace formula, numerical studies of its behavior inside the critical
strip, $\left|\Re s\right|\le\frac{1}{2}$ have been surprisingly
scarce in the literature. The main reason is of course the fact that
the defining formula does not represent an analytic function in this
domain so one is forced to, one way or the other, analytically continue
this expression.

To the authors knowledge, even for the simple case of the modular
surface, the only successful numerical evaluation of $Z_{\Gamma}$
on the critical line was made by Matthies and Steiner \cite{MR1141108}.
They overcame the difficulty by desymmetrizing the modular surface
$\H/\PSLZ$ with respect to reflection in the imaginary axis and then
restricting their analysis to the odd part, which conveniently avoids
any interference by the continuous part of the spectra. For this system,
corresponding to a billiard with Dirichlet boundary conditions, they
consider a modified Selberg Zeta function, $Z_{-}\left(s\right)$,
which has a Dirichlet series representation which seems to be conditionally
convergent up to $\Re s=\frac{1}{2}$. For convex, co-compact Schottky
groups Guillopé, Lin and Zworski \cite{MR2036371} presented numerical
results for $Z\left(s\right)$ in a large range of $\Im s$. They
use a method based on transfer operators and due to the co-compactness
they are able to evaluate the related Fredholm determinants in a more
or less straight-forward manner in terms of fixed points of the corresponding
maps (cf. e.g. also Jenkinson and Pollicott \cite{MR1902887}). 

In this paper, we also consider an approach to the Selberg zeta function
using a transfer operators. This method is applied to the family of
Fuchsian groups known as Hecke triangle groups, generalizing the modular
group. These groups have finite area but are not co-compact, so the
evaluation of the corresponding Fredholm determinants is more involved.

It can not be stressed too much that at least one of the steps in
our proposed method is not entirely rigorous but rather supported
by heuristic arguments and the entire method is supported by numerical
evidence.

\section{Hyperbolic geometry and Hecke surfaces}

Let $\H=\left\{ z\in\mathbb{C}\,|\,\Im z>0\right\} $ be the hyperbolic
upper half-plane together with the metric given by $ds=\frac{\left|dz\right|}{y}$,
the group of isometries of $\H$ is $\PSLR\cong\SLR/\left\{ \pm I_{2}\right\} $
where $\SLR$ is the group of $2\times2$ real matrices with determinant
$1$ and $I_{2}$ is the $2\times2$ identity matrix. Elements of
$\PSLR$ acts on $\H$ via Möbius transformations. If $g=\left(\begin{smallmatrix}a & b\\
c & d\end{smallmatrix}\right)\in\PSLR$ then $z\mapsto\frac{az+b}{cz+d}$ and we say that we say that $g$
is \emph{elliptic}, \emph{hyperbolic} or \emph{parabolic} depending
on whether $\left|\Tr\, g\right|=\left|a+d\right|<2,$ $>2$ or $=2$.
The same notation applies for fixed points of $g$. A parabolic fixed
point is a degenerate fixed point, belongs to $\partial\H$ and is
usually called a cusp. Elliptic points $z$ appear in pairs, one belongs
to $\H$ and the other one is in the lower half-plane $\overline{\H}$
and its stabilizer subgroup $\Gamma_{z}$ in $\Gamma$ is cyclic of
finite order $m$. Hyperbolic fixed points appear also in pairs with
$x,x^{*}\in\partial\H$, where $x^{*}$ is said to be the conjugate
point of $x$. A geodesics $\gamma$ on $\H$ is either a half-circle
orthogonal to $\mathbb{R}$ or a line parallel to the imaginary axis
and the endpoints of $\gamma$ are denoted by $\gamma_{\pm}\in\partial\H$. 

Let $\pi:\H\rightarrow\mathcal{M}=\Gamma\mathcal{\backslash}\H$ be
the natural projection map, i.e. $\pi\left(z\right)=\Gamma z$ then
$\gamma^{*}=\pi\left(\gamma\right)$ is a closed geodesic on $\mathcal{M}$
if and only if each $\gamma\in\pi^{-1}\left(\gamma^{*}\right)$ has
endpoints which are conjugate hyperbolic fixed points. This gives
a one-to-one correspondence between hyperbolic conjugacy classes in
$\Gamma,$ i.e. the set $\left\{ \left[P\right]\,|\, P\in\Gamma,\,\left|\Tr P\right|>2\right\} $
where $\left[P\right]=\left\{ APA^{-1}\,|\, A\in\Gamma\right\} $.
It is known that any hyperbolic element $P$ can be written as a power
of a primitive hyperbolic element, $P_{0}$, i.e. $P=P_{0}^{m}$ for
some $m\ge1$. We denote this integer by $m\left(P\right)$. In terms
of closed geodesics on $\mathcal{M}$ this means that every closed
geodesic has a minimal length obtained by traversing it once only.
We can now define the Selberg zeta function for $\Gamma$ as \begin{equation}
Z_{\Gamma}\left(s\right)=\prod_{\left[P_{0}\right]\in\Hypp}\prod_{k\ge0}\left(1-\mathcal{N}\left(P_{0}\right)^{-k-s}\right)\label{eq:Zeta}\end{equation}
where $\Hypp$ is the set of primitive hyperbolic conjugacy classes
in $\Gamma$, $P_{0}$ is a representative in this class with $\Tr P_{0}>2$
and the norm of $P,$ $\mathcal{N}\left(P\right)$ is the solution
of $\left|\Tr P\right|=\mathcal{N}^{\frac{1}{2}}+\mathcal{N}^{-\frac{1}{2}}$
with $1<\mathcal{N<\infty}.$ We observe that $\mathcal{N}\left(P_{0}^{n}\right)=\mathcal{N}\left(P_{0}\right)^{n}$
and since the trace is invariant under conjugation $\mathcal{N}$
is constant over conjugacy classes. If $\gamma$ is the geodesic corresponding
to $P$ then the length of $\gamma$ is $l\left(\gamma\right)=\ln\mathcal{N}\left(P\right)$.
For $\Re s>1$ the logarithm of $Z_{\Gamma}\left(s\right)$ can be
written \begin{alignat}{1}
-\ln Z_{\Gamma}\left(s\right) & =-\sum_{k\ge0}\sum_{\left[P_{0}\right]\in\Hypp}\ln\left(1-\mathcal{N}\left(P_{0}\right)^{-k-s}\right)\label{eq:logZ}\\
 & =\sum_{k\ge0}\sum_{\left[P_{0}\right]\in\Hypp}\sum_{n\ge1}\frac{1}{n}\mathcal{N}\left(P_{0}\right)^{-kn-sn}\nonumber \\
 & =\sum_{\left[P_{0}\right]\in\Hypp}\sum_{n\ge1}\frac{1}{n}\mathcal{N}\left(P_{0}\right)^{-sn}\frac{1}{1-\mathcal{N}\left(P_{0}\right)^{-n}}\nonumber \\
 & =\sum_{\left[P_{0}\right]\in\Hypp}\sum_{n\ge1}\frac{1}{n}\frac{\mathcal{N}\left(P_{0}^{n}\right)^{-s}}{1-\mathcal{N}\left(P_{0}^{n}\right)^{-1}}=\sum_{\left[P\right]\in\Hyp}\frac{1}{m\left(P\right)}\frac{\mathcal{N}\left(P\right)^{-s}}{1-\mathcal{N}\left(P\right)^{-1}}.\nonumber \end{alignat}
For an integer $q\ge3$ the \emph{Hecke triangle group} $G_{q}$ is
generated by the maps $S:z\mapsto-\frac{1}{z}$ and $T:z\mapsto z+\lambda_{q}$
where $\lambda_{q}=2\cos\left(\frac{\pi}{q}\right)\in\left[1,2\right)$.
Let $I_{q}=\left[-\frac{\lambda}{2},\frac{\lambda}{2}\right]$. One
can show (cf. e.g. \cite[VII]{MR0164033}) that $G_{q}$ is a Fuchsian
group (discrete subgroup of $\PSLR$) with the only relations $S^{2}=\left(ST\right)^{q}=Id$
and which has $\mathcal{F}_{q}=\left\{ z\in\H\,\large{|}\,\Re z\in I_{q},\,\left|z\right|\ge1\right\} $
as a closed fundamental domain (with sides properly pair-wise identified).
It follows that $G_{q}$ is co-finite, meaning that the \emph{Hecke
triangle surface,} $\mathcal{M}_{q}=G_{q}\backslash\H$, has finite
hyperbolic area. In the following we usually write $\lambda$ for
$\lambda_{q}$, $\Hyp$ and $\Hypp$ denotes the set of hyperbolic
respectively primitive hyperbolic conjugacy classes in $G_{q}$.

\section{Symbolic Coding}

In \cite{symbolic_dynamics} we showed that the geodesic flow on the
unit tangent bundle of $\mathcal{M}_{q}$, $\SM\cong\mathcal{M}_{q}\times S^{1}$
can be coded in terms of regular $\lambda$-fractions (nearest $\lambda$-multiple
continued fractions). If $\nlm{x}=\left\lfloor \frac{x}{\lambda}+\frac{1}{2}\right\rfloor $
is a nearest $\lambda$-multiple function we define $\Fq:I_{q}\rightarrow I_{q}$
by $\Fq\left(0\right)=0$ and $\Fq\left(x\right)=-\frac{1}{x}-\nlm{x}\lambda$
for $x\ne0$. For any number $x\in\mathbb{R}$ we obtain the regular
$\lambda$-fraction of $x$, $\Cq\left(x\right)=\RS{a_{0};a_{1},a_{2},\ldots}$
by first setting $a_{0}=\nlm{x},$ $x_{1}=x-a_{0}\lambda$, and then
recursively set $a_{n}=\nlm{Sx_{n}}$ and $x_{n+1}=\Fq\left(x_{n}\right)$
for $n\ge1$. Note that $x=\lim_{n\rightarrow\infty}T^{a_{0}}ST^{a_{1}}\,\cdots\, ST^{a_{n}}\left(0\right)$.
If $x$ is a cusp of $G_{q}$ this algorithm terminates and we get
a finite $\lambda$-fraction and if $x$ is a hyperbolic fixed point
of $G_{q}$ then it has an eventually periodic $\lambda$-fraction.
It follows that $\Fq$ acts as a left shift map on $\RNCF$, the set
of regular $\lambda$-fractions viewed as a subset of $\mathbb{Z}^{\mathbb{N}}$.
If $a_{0}=0$ we usually omit the leading {}``$a_{0};$'', repetitions
in the $\lambda$-fraction are denoted by powers and infinite repetitions
by overlines. 

In \cite{cf_main} it was shown that $\Fq$ is almost orbit equivalent
to $G_{q},$ that is, two points $x,y\in\mathbb{R}$ are equivalent
under the action of $G_{q}$ if and only if, either they have regular
$\lambda$-fractions with the same tail or $x$ has the same tail
as $r$ and $y$ the same tail as $-r$ (or vice versa). Here $r\in I_{q}$
is a special hyperbolic point which can be given either in terms of
its regular $\lambda$-fraction or explicitly. For even $q$ one has
$r=1-$$\lambda$ and $\Cq\left(r\right)=\RS{\overline{1^{h-1},2}}$
with $h=\frac{q-2}{2}$ and for odd $q$ one has $r=R-\lambda$ where
$R$ is the positive solution of $R^{2}+\left(2-\lambda\right)R-1=1$
and $\Cq\left(r\right)=\RS{\overline{1^{h},2,1^{h-1},2}}$ with $h=\frac{q-3}{2}$. 

Let $\PP$ be the set of all purely periodic regular $G_{q}$-inequivalent
$\lambda$-fractions and set $\PPr=\PP\backslash\left\{ -r\right\} $,
i.e. the set of purely periodic regular $\lambda$-fractions with
tail not equivalent to $-r$. Let $\PPr_{k}$ denote the subset with
minimal period $k\ge1$ and set $\PPr_{0}=\cup_{k\ge1}\PPr_{k}$. 

It was also shown in \cite{symbolic_dynamics} that for the part of
the geodesic flow not disappearing into the cusp there exists a cross
section $\Sigma$ and a first return map $\mathcal{T}:\Sigma\rightarrow\Sigma$
which has as a factor map in the expanding direction the map $\mathcal{T}_{x}:I_{q}\rightarrow I_{q}$
given by powers of the generating map of the nearest $\lambda$-multiple
fractions, $\Fq$. Closed geodesics on $\mathcal{M}_{q}$ correspond
to the orbits of fixed points of $\mathcal{T}$ and it is easy to
verify that these correspond in fact to fixed points of $\Fq$, i.e.
points with purely periodic regular $\lambda$-fractions. It follows
that there is a one-to-one correspondence between $\PPr_{0}$ and
$\Hypp$. 

In practice, if $\Cq\left(x\right)=\RS{\overline{a_{1},\ldots,a_{n}}}$
then $A_{\vec{a}}=ST^{a_{1}}\,\cdots\, ST^{a_{n}}\in G_{q}$ is hyperbolic,
has attractive fixed-point $x$, repelling fixed-point $x^{*}$ and
the geodesic $\gamma\left(x,x^{*}\right)$ is closed. Furthermore
$y=\frac{1}{x^{*}}$ has dual regular $\lambda$-fraction (cf.~\cite{symbolic_dynamics})
$\Cd\left(y\right)=\DS{\overline{a_{n},a_{n-1},\ldots,a_{1}}}$ and
$y\in\left[-R,-r\right]\sgn\left(x\right)$. 

This connection (coding) between primitive hyperbolic conjugacy classes
and periodic orbits of $\Fq$ is precisely what allows us to relate
the Fredholm determinant of the transfer operator for $\Fq$ to the
Selberg zeta function.

\section{The transfer operator}

In this section we will construct the so-called transfer operator
for the map $\Fq$ defined in the previous section.

\subsection{Markov partitions}

There is a particular Markov partition of $I_{q}$ with respect to
$\Fq$ which is important here, namely the one determined by the orbit
$\left\{ \Fq^{j}\left(\pm\frac{\lambda}{2}\right),\, j=1\ldots,\np\right\} $
of the endpoints $\pm\frac{\lambda}{2}$ under $\Fq$. Let $\left\{ \overline{\mathcal{I}}_{j}\right\} _{j\in\mathcal{J}_{\np}}$
be the decomposition of $\left[-\frac{\lambda}{2},\frac{\lambda}{2}\right]$
determined by this orbit with $\mathcal{J}_{\np}=\left\{ 1,2,\ldots,\np,-\np,\ldots,-2,-1\right\} ,$
$\mathcal{I}_{j}=\left[\phi_{j-1},\phi_{j}\right)=-\mathcal{I}_{-j}$
where the order of $\mathcal{O}\left(-\frac{\lambda}{2}\right)=\left\{ \Fq^{j}\phi_{0}\right\} =\left\{ \phi_{j}\right\} _{j=0}^{\np}$
given as $-\frac{\lambda}{2}=\phi_{0}<\phi_{1}<\cdots<\phi_{\np}=0$
and $\phi_{-j}=-\phi_{j}$. If $q$ is even $\np=\frac{q-2}{2}=h$
and $-\frac{\lambda}{2}=\RS{1^{h}}$. If $q$ is odd $\np=\frac{q-3}{2}=2h+1$
and $-\frac{\lambda}{2}=\RS{1^{h},2,1^{h}}$. It is easy to verify
that the closure of the intervals, $\left\{ \overline{\mathcal{I}}_{j}\right\} $
is indeed a Markov partition of $I_{q}$ for $\Fq$. Let $\varphi_{n}\left(y\right)=ST^{n}y=\frac{-1}{n\lambda+y}$
then the most important property of the partition $\left\{ \mathcal{I}_{j}\right\} $
is the fact that if $y\in\mathcal{I}_{j}$ then $\Fq\,^{-1}\left(y\right)=\left\{ \varphi_{n}\left(y\right)\,|\, n\in\mathcal{N}_{j}\right\} $
where $\mathcal{N}_{j}$ is a fixed set of integers depending only
on $j$. It is now easy to show that if $l\ge1$, $i\in\mathcal{J}_{\np}$
and $y\in\mathcal{I}_{i}$ then \[
\left(\Fq\,^{-1}\right)^{l}\left(y\right)=\bigcup_{j\in\mathcal{J}_{\np}}\left\{ \varphi_{n_{l}}\circ\ldots\circ\varphi_{n_{2}}\circ\varphi_{n_{1}}\left(y\right)\,\bigg{|}\,\left(n_{1},\ldots,n_{l}\right)\in\mathcal{N}_{ij}^{l}\right\} \]
where we define $\mathcal{N}_{ij}^{l}:=\left\{ \left(n_{1},n_{2},\ldots,n_{l}\right)\in\mathbb{Z}^{l}\,\bigg{|}\, ST^{n_{l}}ST^{n_{l-1}}\cdots ST^{n_{1}}\mathcal{I}_{i}\subset\mathcal{I}_{j}\right\} .$
Let $\mathbb{Z}_{\ge m}=\left\{ j\in\mathbb{Z}\,|\, j\ge m\right\} $
and for $A\subseteq\mathbb{Z}$ let $-A=\left\{ j\in\mathbb{Z}\,|\,-j\in A\right\} $.
It is shown in \cite{cf_transferoperator} that $\mathcal{N}_{ij}\in\left\{ \left\{ 1\right\} ,\left\{ 2\right\} ,\mathbb{Z}_{\ge2},\mathbb{Z}_{\ge3}\right\} $
for $i,j\in\mathcal{J}_{\np}$, $j\ge0$ and that $\mathcal{N}_{i-j}=-\mathcal{N}_{-ij}$.
It is also shown that the non-empty elements of $\mathcal{N}_{ij}$
(for $j\ge0$) are given by the following expressions: \begin{alignat*}{1}
\mathcal{N}_{1,2h} & =\left\{ 2\right\} ,\mathcal{N}_{1,2h+1}=\mathbb{Z}_{\ge3},\mathcal{N}_{-1,2h}=\mathcal{N}_{-2,2h}=\left\{ 1\right\} ,\\
\mathcal{N}_{i,i-2} & =\mathcal{N}_{-i,2h}=\left\{ 1\right\} ,\mathcal{N}_{i,2h+1}=\mathcal{N}_{-i,2h+1}=\mathbb{Z}_{\ge2},\,3\le i\le2h+1,\\
\mathcal{N}_{2,2h+1} & =\mathcal{N}_{-1,2h+1}=\mathcal{N}_{-2,2h+1}=\mathbb{Z}_{\ge2}\end{alignat*}
if $q$ is odd and \begin{alignat*}{1}
\mathcal{N}_{1,h} & =\mathbb{Z}_{\ge2},\,\,\mathcal{N}_{-1,h}=\mathbb{Z}_{\ge1},\\
\mathcal{N}_{i,i-1} & =\left\{ 1\right\} ,\mathcal{N}_{i,h}=\mathbb{Z}_{\ge2},\,\mathcal{N}_{-i,h}=\mathbb{Z}_{\ge1},\,2\le i\le h\end{alignat*}
if $q$ is even. For example \[
\left(\mathcal{N}_{ij}\right)=\left(\begin{smallmatrix}\mathbb{Z}_{\ge3} & -\mathbb{Z}_{\ge2}\\
\mathbb{Z}_{\ge2} & -\mathbb{Z}_{\ge3}\end{smallmatrix}\right),\,\mbox{for\,}q=3,\,\mbox{and}\,\left(\mathcal{N}_{ij}\right)=\left(\begin{smallmatrix}\mathbb{Z}_{\ge2} & -\mathbb{Z}_{\ge1}\\
\mathbb{Z}_{\ge1} & -\mathbb{Z}_{\ge2}\end{smallmatrix}\right)\,\mbox{for}\, q=4.\]

\subsubsection{Transfer Operator corresponding to $\Fq$}

For any interval $I\subset\mathbb{R}$ let $C\left(I\right)$ denote
the space of continuous real-valued functions on $I$. If $f\in C\left(I_{q}\right)$
the \emph{transfer}, or \emph{generalized Perron-Frobenius operator}
$\mathcal{L}_{\beta}$ corresponding to $\Fq$, acts for real $\beta>\frac{1}{2}$
on $f$ by \begin{eqnarray*}
\mathcal{L}_{\beta}f\left(x\right) & = & \sum_{y\in\Fq^{-1}\left(x\right)}\left|\frac{d}{dx}F_{q}^{-1}\left(x\right)\right|^{\beta}f\left(y\left(x\right)\right)\\
 & = & \sum_{i\in\mathcal{J}_{\np}}\chi_{\mathcal{I}_{i}}\left(x\right)\sum_{n\in\mathcal{N}_{i}}\left|\varphi_{n}'\left(x\right)\right|^{\beta}f\left(\varphi_{n}\left(x\right)\right)\end{eqnarray*}
where $\chi_{\mathcal{I}_{j}}$ is the characteristic function of
$\mathcal{I}_{j}$. It is important to note here, that $\mathcal{L}_{\beta}f\left(x\right)$
is in general not continuous, but only piece-wise continuous. For
this reason consider the action of $\mathcal{L}_{\beta}$ on vector-valued
functions in $\mathcal{C}=\bigoplus_{i\in\mathcal{J}_{\np}}C\left(\mathcal{I}_{i}\right)$.
For $\vec{f}\in\mathcal{C}$ we set $\vec{f}\left(x\right)=f_{i}\left(x\right)$
if $x\in\mathcal{I}_{i}$. We can now write $\mathcal{L}_{\beta}:\mathcal{C}\rightarrow\mathcal{C}$
as \begin{eqnarray*}
\left(\mathcal{L}_{\beta}\vec{f}\right)_{i}\left(x\right) & = & \sum_{j}\sum_{n\in\mathcal{N}_{ij}^{1}}\left|\varphi_{n}'\left(x\right)\right|^{\beta}f_{j}\left(\varphi_{n}\left(x\right)\right),\,\, i\in\mathcal{J}_{\np}\end{eqnarray*}
respectively, for any $l\ge1$ \begin{eqnarray*}
\left(\mathcal{L}_{\beta}^{l}\vec{f}\right)_{i}\left(x\right) & = & \sum_{j}\mathcal{L}_{\beta,ij}^{l}f_{j}\left(x\right),\,\, i\in\mathcal{J}_{\np},\end{eqnarray*}
where\[
\mathcal{L}_{\beta,ij}^{l}f_{j}\left(x\right)=\sum_{\left(n_{1},\ldots,n_{l}\right)\in\mathcal{N}_{ij}^{l}}\left|\left(ST^{n_{1}}\,\cdots\, ST^{n_{^{l}}}\right)'x\right|^{\beta}f_{j}\left(ST^{n_{1}}\,\cdots\, ST^{n_{^{l}}}x\right).\]
It is convenient to use a composition operator $\pi_{\beta}$ related
to the principal series representation of $\PSLR$: Define $\pi_{\beta}\left(A\right)f\left(x\right):=\left|A'\left(x\right)\right|^{\beta}f\left(Ax\right)=\left|cx+d\right|^{-2\beta}f\left(\frac{ax+b}{cx+d}\right)$
for $A\in\PSLR$. Note that $\pi_{\beta}\left(AB\right)=\pi_{\beta}\left(B\right)\pi_{\beta}\left(A\right)$.
With this notation one gets \[
\mathcal{L}_{\beta,ij}^{l}f_{j}\left(x\right)=\sum_{\left(n_{1},\ldots,n_{l}\right)\in\mathcal{N}_{ij}^{l}}\pi_{\beta}\left(ST^{n_{1}}\,\ldots\, ST^{n_{l}}\right)f_{j}\left(x\right).\]
To obtain better spectral properties for the operator $\mathcal{L}_{\beta}$,
we have to restrict its domain of definition even more. For any open
disk $D$ in $\mathbb{C}$ we let $\mathcal{B}\left(D\right)$ be
the Banach space of functions holomorphic in $D$ and continuous on
the closure $\overline{D}$ together with the supremum norm. Let $\left\{ D_{i}\right\} _{i\in\mathcal{J}_{\np}}$
be a set of open disks with diameter which contains an $\epsilon$-neighborhood
of $\mathcal{I}_{i}$, constructed in such a way that for $\left(n_{1},\ldots,n_{l}\right)\in\mathcal{N}_{ij}^{l}$
one has $\varphi_{n_{1}}\circ\varphi_{n_{2}}\circ\cdots\circ\varphi_{n_{l}}\left(\overline{D}_{i}\right)\subset D_{j}$.
That such a choice is possible is shown in \cite{cf_transferoperator}.
Let $\mathcal{B}_{i}=\mathcal{B}\left(D_{i}\right)$ and define the
Banach space $\mathcal{B}=\bigoplus_{i\in\mathcal{J}_{\np}}\mathcal{B}_{i}$
with norm given by $\left\Vert \vec{f}\right\Vert =\max_{j}\left\Vert f_{j}\right\Vert _{\mathcal{B}_{i}}$.
Then we want to consider $\mathcal{L}_{\beta}$ as acting $\mathcal{L}_{\beta}:\mathcal{B}\rightarrow\mathcal{B}$.
For this purpose we also need an analytic extension of $\pi_{\beta}$.
If $A=\left(\begin{smallmatrix}a & b\\
c & d\end{smallmatrix}\right)\in\PSLR$ and $A\left(D\right)\subseteq D$ for some disk $D$ then $\pi_{\beta}\left(A\right):\mathcal{B}\left(D\right)\rightarrow\mathcal{B}\left(D\right)$
is defined for any $\beta\in\mathbb{C}$ by $\pi_{\beta}\left(A\right)f\left(z\right)=\left(\left(cz+d\right)^{-2}\right)^{\beta}f\left(\frac{az+b}{cz+d}\right)$.
Usually we simply write the first factor as $\left(cz+d\right)^{-2\beta},$
but remember that there is a choice of sign involved, i.e. $\left(-cz-d\right)^{-2\beta}=\left(cz+d\right)^{-2\beta}$.
For $l\ge1$ and $\vec{f}\in\mathcal{B}$: \begin{align}
\left(\mathcal{L}_{\beta}^{l}\vec{f}\right)_{i}\left(z\right) & =\sum_{j\in\mathcal{J}_{\np}}\mathcal{L}_{\beta,ij}^{l}f_{j}\left(z\right),\, i\in\mathcal{J}_{\np}\,\mbox{ with}\label{eq:Lbeta^l_def}\\
\mathcal{L}_{\beta,ij}^{l}f_{j}\left(z\right) & =\sum_{\left(n_{1},\ldots,n_{l}\right)\in\mathcal{N}_{ij}^{l}}\pi_{\beta}\left(ST^{n_{1}}\cdots ST^{n_{l}}\right)f_{j}\left(z\right).\nonumber \end{align}
We now have a representation of the operator $\mathcal{L}_{\beta}$
as a $\left(\np+1\right)\times\left(\np+1\right)$ matrix of operators
$\left(\mathcal{L}_{\beta,ij}\right)_{i,j\in\mathcal{J}_{\np}}$ with
$\mathcal{L}_{\beta,ij}:\mathcal{B}_{j}\rightarrow\mathcal{B}_{i}$. 

Next we need some facts from Grothendieck's theory of Fredholm determinants
and nuclear operators on Banach spaces \cite{MR0075539} (Ruelle \cite{MR0420720}
provides more detailed references). The following Lemmas follow from
this theory.

\begin{lem}
\label{lem:atiyah-bott}Let $D$ be any open disk in $\mathbb{C}$
and let $\mathcal{B}\left(D\right)$ be as above. If $\Psi:\mathcal{B}\left(D\right)\rightarrow\mathcal{B}\left(D\right)$
is a simple composition operator $\Psi f\left(z\right)=\psi\left(z\right)f\left(\varphi\left(z\right)\right)$
with $\psi,\varphi$ continuous in $D$ and $\overline{\varphi\left(D\right)}\subset D$.
Then $\varphi$ has an attractive fixed-point $z_{*}\in D$, $\Psi$
is nuclear of order zero and has trace $\Tr_{\mathcal{B}\left(D\right)}\Psi=\frac{\psi(z_{*})}{1-\varphi'(z_{*})}$. 
\end{lem}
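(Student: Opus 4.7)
The argument naturally splits into three parts: (i) existence of the attractive fixed point, (ii) nuclearity of $\Psi$, and (iii) computation of the trace.

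First, to produce the fixed point $z_{*}$, I would note that the hypothesis $\overline{\varphi\left(D\right)}\subset D$ forces $\varphi$ to be a strict contraction in the Poincar\'e (hyperbolic) metric of $D$: since $\overline{\varphi(D)}$ is compactly contained in $D$, the Schwarz--Pick inequality applied to $\varphi:D\to\varphi(D)\hookrightarrow D$ gives a uniform contraction constant $c<1$. The Banach fixed-point theorem then yields a unique $z_{*}\in D$ with $\varphi\left(z_{*}\right)=z_{*}$, and differentiating the Schwarz--Pick estimate at $z_{*}$ gives $\left|\varphi'\left(z_{*}\right)\right|<1$, so $z_{*}$ is attractive.

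Second, for nuclearity I would invoke Grothendieck's result that whenever $\overline{\varphi\left(D\right)}\subset D$, the composition map $f\mapsto f\circ\varphi$ factors as $\mathcal{B}\left(D\right)\to\mathcal{B}\left(\varphi(D)\right)\to\mathcal{B}\left(D\right)$, where the first arrow is restriction and the second is the canonical inclusion; because $\overline{\varphi(D)}$ sits strictly inside $D$, the inclusion is nuclear of order zero (this uses a Cauchy-integral/Taylor expansion on an intermediate disk, with coefficients decaying geometrically in a precise sense that is standard in \cite{MR0075539,MR0420720}). Multiplication by the bounded holomorphic symbol $\psi$ preserves this class, so $\Psi$ itself is nuclear of order zero.

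Third, to evaluate the trace, the plan is to diagonalize modulo strictly upper-triangular terms in the monomial basis $\left\{ \left(z-z_{*}\right)^{n}\right\} _{n\ge0}$ of Taylor expansions at $z_{*}$. Writing $\varphi\left(z\right)-z_{*}=\varphi'\left(z_{*}\right)\left(z-z_{*}\right)+O\bigl(\left(z-z_{*}\right)^{2}\bigr)$ and $\psi\left(z\right)=\psi\left(z_{*}\right)+O\left(z-z_{*}\right)$, one sees that
\[
\Psi\bigl(\left(z-z_{*}\right)^{n}\bigr)=\psi\left(z_{*}\right)\varphi'\left(z_{*}\right)^{n}\left(z-z_{*}\right)^{n}+\text{(terms of order}>n\text{)}.
\]
Hence the matrix of $\Psi$ in this basis is upper triangular with diagonal entries $\psi\left(z_{*}\right)\varphi'\left(z_{*}\right)^{n}$. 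Because $\Psi$ is nuclear of order zero, Grothendieck's theory legitimises computing $\Tr\Psi$ as the sum of these diagonal entries (they are the eigenvalues, with multiplicity), and the geometric series $\sum_{n\ge0}\psi\left(z_{*}\right)\varphi'\left(z_{*}\right)^{n}$ converges to $\psi\left(z_{*}\right)/\bigl(1-\varphi'\left(z_{*}\right)\bigr)$ thanks to $\left|\varphi'\left(z_{*}\right)\right|<1$.

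The main technical obstacle is step (ii): justifying from Grothendieck's theory that the composition operator is nuclear of order zero in the exact sense required, and that its Fredholm trace coincides with the sum of eigenvalues computed in step (iii). Once this is in place, steps (i) and (iii) are essentially formal.
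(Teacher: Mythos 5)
Your plan is correct and follows essentially the same route as the paper, which cites Grothendieck's theory for nuclearity of order zero and verifies the trace formula directly from the fact that the eigenvalues of $\Psi$ are $\mu_{n}=\psi\left(z_{*}\right)\left(\varphi'\left(z_{*}\right)\right)^{n}$, $n\ge0$, with $\left|\varphi'\left(z_{*}\right)\right|<1$, summed as a geometric series. Your steps (i)--(iii) simply make explicit the fixed-point, factorization, and triangularization details that the paper leaves as ``easy to verify.''
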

The formula for the trace, sometimes referred to as a special case
of the Atiyah-Bott trace formula is easy to verify directly since
the eigenvalues of $\Psi$ are all of the form $\mu_{n}=\psi\left(z_{*}\right)\left(\varphi'\left(z_{*}\right)\right)^{n}$,
$n\ge0$ and $\left|\varphi'\left(z_{*}\right)\right|<1$. 

\begin{lem}
If $\mathcal{L}$ is a nuclear operator of order zero on a Banach
space we can express the Fredholm determinant $\det\left(1-\mathcal{L}\right)$
in two different ways: \[
-\log\det\left(1-\mathcal{L}\right)=\sum_{l=1}^{\infty}\frac{1}{l}\Tr\mathcal{L}^{l}=-\log\prod_{j=1}^{\infty}\left(1-\lambda_{j}\right)\]
where $\left\{ \lambda_{j}\right\} _{j=1}^{\infty}$ are the eigenvalues
of $\mathcal{L}$ (counted with multiplicity). Furthermore, if $\mathcal{L}=\mathcal{L}\left(s\right)$
is a meromorphic function of $s$ then $\det(1-\mathcal{L}\left(s\right))$
is also meromorphic in $s$. 
\end{lem}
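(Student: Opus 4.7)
The plan is to invoke Grothendieck's theory directly. For a nuclear operator $\mathcal{L}$ of order zero on a Banach space, Grothendieck's construction produces a Fredholm determinant $\det(1-z\mathcal{L})$ which is an entire function of $z$ of order zero. Its canonical factorization is
\[
\det(1-z\mathcal{L}) = \prod_{j=1}^{\infty}(1-z\lambda_{j}),
\]
where $\{\lambda_{j}\}$ are the nonzero eigenvalues of $\mathcal{L}$ counted with algebraic multiplicity; and the key trace identity is $\Tr \mathcal{L}^{l} = \sum_{j} \lambda_{j}^{l}$ for all $l\ge 1$, with $\sum_{j}|\lambda_{j}|^{\alpha} < \infty$ for every $\alpha>0$ (this summability is what ``order zero'' means). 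These two facts are the substance of the theorem and are what I would cite from \cite{MR0075539} or \cite{MR0420720}.

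Granting those inputs, the first assertion is a short calculation. I would take $z=1$ (which is permissible since the product is an entire function of $z$ and hence defined) and compute
\[
-\log \prod_{j=1}^{\infty}(1-\lambda_{j}) = \sum_{j=1}^{\infty} \sum_{l=1}^{\infty} \frac{\lambda_{j}^{l}}{l} = \sum_{l=1}^{\infty}\frac{1}{l}\sum_{j=1}^{\infty}\lambda_{j}^{l} = \sum_{l=1}^{\infty}\frac{1}{l}\Tr \mathcal{L}^{l},
\]
where the interchange of the two sums is justified by absolute convergence coming from order-zero summability of the eigenvalues (for $|\lambda_{j}|$ small the geometric bound dominates, and only finitely many $\lambda_{j}$ have $|\lambda_{j}|\ge \tfrac{1}{2}$, say). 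This is valid in a neighborhood of those $\mathcal{L}$ with spectral radius $<1$; for the general formulation one reads it as an identity of formal power series, or restricts to a domain where $\det(1-\mathcal{L})\ne 0$.

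For the meromorphy statement, the natural route is through Grothendieck's series expansion
\[
\det(1-\mathcal{L}(s)) = \sum_{n=0}^{\infty}(-1)^{n}\Tr\bigl(\Lambda^{n}\mathcal{L}(s)\bigr),
\]
where $\Lambda^{n}\mathcal{L}(s)$ is the $n$-th exterior power; for nuclear $\mathcal{L}$ of order zero this series converges uniformly on compact sets where $\mathcal{L}(s)$ is holomorphic. Each term $\Tr(\Lambda^{n}\mathcal{L}(s))$ is a polynomial in the matrix entries of $\mathcal{L}(s)$, hence meromorphic in $s$ with poles located only where $\mathcal{L}(s)$ itself has poles. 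Uniform convergence on compact subsets of the complement of this pole set shows that $\det(1-\mathcal{L}(s))$ is holomorphic there and extends meromorphically across the poles of $\mathcal{L}(s)$ by local Laurent expansion.

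The main obstacle is not the elementary manipulation in the middle paragraph but rather the input identity $\Tr \mathcal{L}^{l} = \sum_{j}\lambda_{j}^{l}$: the equality of the algebraically defined Grothendieck trace with the spectral trace is the deep fact, and in the present setting it is simply imported from the cited references rather than reproved.
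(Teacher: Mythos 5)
Your argument is correct and follows the same route as the paper, which gives no proof beyond citing Grothendieck \cite[prop.~1, pp.~346--347]{MR0075539}: the deep inputs (the trace identity, the eigenvalue factorization, and the exterior-power expansion giving meromorphy) are imported from that reference, and your remaining manipulations are the standard elementary steps. Nothing further is needed.
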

\begin{proof}
Cf. e.g. \cite[prop.\ 1, pp.\ 346-347]{MR0075539}.
\end{proof}
\begin{lem}
\label{lem:Let-A-be-hyperbolic}Let $A\in\SLR$ be hyperbolic with
attractive and repelling fixed points $x_{+}$ and $x_{-}$ respectively.
If $D$ is a disk with diameter on $\mathbb{R}$ containing only the
attractive fixed point of $A$ then $\overline{A\left(D\right)}\subset D$. 
\end{lem}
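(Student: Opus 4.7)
The plan is to exploit the fact that $A \in \SLR$ acts as a M\"obius transformation on the Riemann sphere, and therefore sends generalized circles to generalized circles while preserving $\widehat{\mathbb{R}} = \mathbb{R} \cup \{\infty\}$. A disk $D$ with diameter on $\mathbb{R}$ is precisely a disk whose boundary is a circle orthogonal to $\widehat{\mathbb{R}}$, meeting it in two points $a, b$ so that $D \cap \mathbb{R} = (a, b)$. Consequently $A(D)$ is a disk of the same type, with boundary meeting $\widehat{\mathbb{R}}$ at $A(a)$ and $A(b)$, and the whole problem reduces to tracking how $A$ moves the two endpoints $a, b$ on the circle $\widehat{\mathbb{R}}$.

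Next I would analyze the dynamics of $A$ on $\widehat{\mathbb{R}}$. Since $A'(x) = (cx+d)^{-2} > 0$ wherever it is defined, $A$ is an orientation-preserving homeomorphism of $\widehat{\mathbb{R}}$ whose only fixed points are $x_+$ and $x_-$. Hence each of the two open arcs of $\widehat{\mathbb{R}} \setminus \{x_+, x_-\}$ is $A$-invariant, and on each arc $A$ moves every point strictly towards $x_+$. The hypothesis that $D$ contains $x_+$ but not $x_-$ translates into the cyclic order $a, x_+, b, x_-$ on $\widehat{\mathbb{R}}$, so that $a$ and $b$ lie on opposite arcs bounded by $x_\pm$.

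Applying the strict contraction at each endpoint gives $A(a)$ strictly between $a$ and $x_+$ on its arc, and $A(b)$ strictly between $x_+$ and $b$ on its arc. Since both of these arcs avoid $x_-$ and, after reducing to the non-degenerate case, also avoid $\infty$, this becomes the Euclidean strict containment $a < A(a) < x_+ < A(b) < b$, so the diameter of $A(D)$ lies strictly inside the diameter of $D$. A short computation with centers and radii --- the disk with diameter $(\alpha, \beta)$ has center $\tfrac{\alpha+\beta}{2}$ and radius $\tfrac{\beta-\alpha}{2}$ --- then yields $\overline{A(D)} \subset D$.

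The hard part is really only the geometric bookkeeping: one has to rule out degenerate configurations such as $x_- \in \partial D$, or $\infty$ lying on one of the relevant arcs (which would force $A(D)$ to be a half-plane rather than a bounded disk). Both are handled cleanly by phrasing the whole argument on $\widehat{\mathbb{R}}$ with cyclic order and invoking the $A$-invariance of each arc, rather than by conjugating $A$ to a diagonal normal form which could move the diameter endpoints through $\infty$.
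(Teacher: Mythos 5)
Your argument is correct, but it takes a genuinely different route from the paper's. The paper settles the lemma in one line by conjugating inside $\SLR$ with the map sending $x_{+}\mapsto 0$ and $x_{-}\mapsto\infty$, which turns $A$ into the dilation $z\mapsto l^{2}z$ with $0<l<1$; since $x_{-}\notin\overline{D}$, the conjugated disk is again a bounded disk with diameter on $\mathbb{R}$ containing $0$, and scaling by $l^{2}<1$ visibly pulls its closure strictly inside itself. In particular the worry you raise about the normal-form approach --- that the diameter endpoints might be pushed through $\infty$ --- does not actually arise there: $\infty$ is the image of $x_{-}$, which by hypothesis lies outside the closed disk, so the conjugated disk stays bounded. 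Your proof instead keeps $A$ as it is and tracks the endpoints $a,b$ of the diameter on $\widehat{\mathbb{R}}$, using the cyclic order $a,x_{+},b,x_{-}$, the $A$-invariance of the two arcs bounded by the fixed points, the monotone motion towards $x_{+}$ on each arc, and the final center--radius computation from $a<A(a)<x_{+}<A(b)<b$; this is sound, provided (as you note) one reads the hypothesis as $x_{-}\notin\overline{D}$, since with $x_{-}\in\partial D$ the strict inclusion genuinely fails. The trade-off: your version is longer but self-contained and makes the order-theoretic bookkeeping on $\widehat{\mathbb{R}}$ explicit, while the paper's conjugation reduces everything to an obvious statement about a Euclidean dilation at the cost of invoking the normal form of a hyperbolic element.
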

\begin{proof}
This Lemma is easy to verify by conjugating with the map in $\SLR$
which takes $x_{+}$ to $0$, $x_{-}$ to $i\infty$ and $A$ to $z\mapsto l^{2}z$
with $0<l<1$. 
\end{proof}
If $A=\left(\begin{smallmatrix}a & b\\
c & d\end{smallmatrix}\right)$ is hyperbolic with attractive fixed point $x_{+}$ it is easy to
verify that $\mathcal{N}\left(A\right)=j_{A}\left(x_{+}\right)^{2}$
where $j_{A}\left(x\right)=cx+d$. Since $\pi_{\beta}\left(A\right)f\left(x\right)=j_{A}\left(x\right)^{-2\beta}f\left(Ax\right)$
it is easy to see that if $x_{+}\in D$ and $x_{-}\notin D$ then
by Lemma \ref{lem:atiyah-bott} $\pi_{\beta}\left(A\right)$ is nuclear
of order zero and \[
\Tr_{\mathcal{B}\left(D\right)}\pi_{\beta}\left(A\right)=\frac{\mathcal{N}\left(A\right)^{-\beta}}{1-\mathcal{N}\left(A\right)^{-1}}.\]
Let $\vec{n}=\left(n_{1},\ldots,n_{l}\right)\in\mathcal{N}_{jj}^{l}$
and set $A_{\vec{n}}=ST^{n_{1}}ST^{n_{2}}\cdots\, ST^{n_{l}}$. Then
$A_{\vec{n}}\mathcal{\overline{I}}_{j}\subsetneq\mathcal{I}_{j}$
so the attractive fixed point of $A_{\vec{n}}$, $x_{\vec{n}}=\RS{\overline{n_{1},\ldots,n_{l}}}\in\mathcal{I}_{j}$
and by Lemma \ref{lem:Let-A-be-hyperbolic} it is clear that $A_{\vec{n}}\left(\overline{D}_{j}\right)\subset D_{j}$.
This demonstrates that all composition operators showing up in the
operators $\mathcal{L}_{\beta,jj}^{l}$ appearing in the trace of
$\mathcal{L}_{\beta}^{l}$ are nuclear of order zero for $\Re\beta>\frac{1}{2}$.
The arguments in \cite{MR0418168} or \cite{MR1974398} can be generalized
to show that $\mathcal{L}_{\beta}^{l}$ is also of trace class and
nuclear of order zero for $\Re\beta>\frac{1}{2}$. It is clear that
$\Tr_{\mathcal{B}}\mathcal{L}_{\beta}^{l}=\sum_{i\in\mathcal{J}_{\np}}\Tr_{\mathcal{B}_{i}}\mathcal{L}_{\beta,ii}^{l}$
for any $l\ge1$ and it is also not hard to see that any off-diagonal
term, $\mathcal{L}_{\beta,ij}^{l}:\mathcal{B}_{j}\rightarrow\mathcal{B}_{i}$
is a bounded operator. One can now use similar arguments as those
in \cite{MR1789478} to show the following lemma.

\begin{lem}
\emph{If $\Re\beta>\frac{1}{2}$ then $\mathcal{L}_{\beta}$ is nuclear
of order zero and hence of trace class. }
\end{lem}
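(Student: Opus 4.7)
The plan is to reduce the nuclearity of $\mathcal{L}_{\beta}$ on the finite direct sum $\mathcal{B}=\bigoplus_{i\in\mathcal{J}_{\np}}\mathcal{B}_{i}$ to an absolute series estimate for the individual composition operators $\pi_{\beta}(ST^{n})$, following the general scheme used in \cite{MR1789478} for the modular group. Writing $\mathcal{L}_{\beta}$ as the matrix of operators $(\mathcal{L}_{\beta,ij})$ with $\mathcal{L}_{\beta,ij}:\mathcal{B}_{j}\to\mathcal{B}_{i}$, a finite matrix of nuclear-of-order-zero operators is itself nuclear of order zero, so it suffices to prove that each entry $\mathcal{L}_{\beta,ij}$ has this property.

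For those pairs $(i,j)$ where $\mathcal{N}_{ij}$ is a singleton $\{1\}$ or $\{2\}$, the entry reduces to a single composition operator $\pi_{\beta}(ST^{n})$. By the construction of the disks one has $\varphi_{n}(\overline{D}_{j})\subset D_{i}$, so Lemma \ref{lem:atiyah-bott} applies and $\pi_{\beta}(ST^{n})$ is nuclear of order zero. For the pairs with $\mathcal{N}_{ij}$ infinite (of the form $\mathbb{Z}_{\ge m}$ or $-\mathbb{Z}_{\ge m}$ with $m\in\{1,2,3\}$), the entry is a genuine series $\mathcal{L}_{\beta,ij}f=\sum_{n\in\mathcal{N}_{ij}}\pi_{\beta}(ST^{n})f$, and the central quantitative input is the uniform estimate
\[
|\varphi_{n}'(z)|=\frac{1}{|n\lambda+z|^{2}}=O(n^{-2}),\qquad z\in\overline{D}_{i},
\]
which in turn yields the operator-norm bound $\|\pi_{\beta}(ST^{n})\|_{\mathcal{B}_{j}\to\mathcal{B}_{i}}=O(n^{-2\Re\beta})$. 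Consequently, for $\Re\beta>\tfrac{1}{2}$ the tail of the series converges absolutely in operator norm.

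To upgrade operator-norm convergence to preservation of the nuclear-of-order-zero property, I would show that each summand $\pi_{\beta}(ST^{n})$ is quantitatively nuclear, with approximation numbers decaying geometrically at rate $|\varphi_{n}'(x_{n})|^{k}=O(n^{-2k})$, where $x_{n}\in\overline{\mathcal{I}}_{j}$ is the attractive fixed point of $ST^{n}$ (this geometric decay of approximation numbers on $\mathcal{B}(D)$ is exactly what Lemma \ref{lem:atiyah-bott} makes transparent via the eigenvalue formula $\mu_{k}=\psi(z_{*})\varphi'(z_{*})^{k}$). Combining the geometric decay in $k$ with the $O(n^{-2\Re\beta})$ tail estimate in $n$ gives summability of all approximation numbers with any positive exponent, which is the characterization of nuclear-of-order-zero; here I would appeal to \cite{MR0075539,MR0420720} for the Grothendieck framework and to \cite{MR1789478} for the precise arrangement of the series estimate in a Hecke-type setting.

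The main obstacle is this last bookkeeping step: the trace class is \emph{not} closed under the tail operations one needs, so convergence must be expressed in a quasi-norm on the ideal of nuclear operators of order zero which is dominated jointly by the operator norm of the summands and the geometric contraction data of the generating maps $\varphi_{n}$. Once that framework is in place, the essential analytic content of the lemma is simply the convergence of $\sum_{n}n^{-2\Re\beta}$ for $\Re\beta>\tfrac{1}{2}$, and the trace-class conclusion then follows automatically because every nuclear operator of order zero on a Banach space is of trace class.
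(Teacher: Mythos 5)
Your reduction to the entries $\mathcal{L}_{\beta,ij}$, the treatment of the singleton families by a single composition operator, and the use of $\sup_{z\in\overline{D}_{i}}\left|\left(z+n\lambda\right)^{-2\beta}\right|=O\left(n^{-2\Re\beta}\right)$ together with the convergence of $\sum_{n}n^{-2\Re\beta}$ for $\Re\beta>\frac{1}{2}$ is exactly the raw material of the Mayer-type arguments that the paper cites (\cite{MR0418168,MR1974398,MR1789478}) instead of giving a proof. But the step you dismiss as bookkeeping is where your argument genuinely breaks. There is no quasi-norm on the class of nuclear operators of order zero: it is the intersection over all $p>0$ of the ideals of $p$-nuclear operators, so you must fix $p$ and work in that ideal. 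For $p\le1$ the quasi-triangle inequality requires $\sum_{n}\left\Vert \pi_{\beta}\left(ST^{n}\right)\right\Vert _{p}^{p}<\infty$, and since any $p$-nuclear quasi-norm dominates the operator norm, and $\left\Vert \pi_{\beta}\left(ST^{n}\right)\mathbf{1}\right\Vert \asymp n^{-2\Re\beta}$, this series diverges whenever $2p\Re\beta\le1$. Hence summing quantitative nuclearity of the individual terms over $n$, no matter how you arrange the geometric decay in $k$, can only yield nuclearity of order $\le\frac{1}{2\Re\beta}$, never order zero; the $k=0$ rank-one pieces alone obstruct it. (Two smaller points: identifying the decay rate of the approximation numbers with $\left|\varphi_{n}'\left(x_{n}\right)\right|^{k}$ is unjustified — Lemma \ref{lem:atiyah-bott} gives eigenvalues, and eigenvalue decay bounds the decay of approximation numbers from below, not above; and Lemma \ref{lem:atiyah-bott} is stated for an endomorphism of one disk with a fixed point, so for the off-diagonal singleton entries $\mathcal{B}_{j}\rightarrow\mathcal{B}_{i}$ it does not literally apply.)

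The missing idea is uniformity in $n$. For an infinite family $\mathcal{N}_{i\np}=\left\{ n\ge n_{i\np}\right\} $ the images $\varphi_{n}\left(\overline{D}_{i}\right)$ shrink towards $0\in D_{\np}$, so all of them (and likewise the image for each singleton entry) lie in one disk $D''$ with $\overline{D''}\subset D_{\np}$. Therefore $\mathcal{L}_{\beta,i\np}$ factors as $B\circ R$, where $R:\mathcal{B}\left(D_{\np}\right)\rightarrow\mathcal{B}\left(D''\right)$ is the restriction operator — a single operator, independent of $n$, whose approximation numbers decay geometrically in the ratio of the radii, hence nuclear of order zero — and $B:\mathcal{B}\left(D''\right)\rightarrow\mathcal{B}_{i}$, $g\mapsto\sum_{n\ge n_{i\np}}\left(\left(z+n\lambda\right)^{-2}\right)^{\beta}g\left(\varphi_{n}\left(z\right)\right)$, which only needs to be bounded, and is, precisely by your $O\left(n^{-2\Re\beta}\right)$ estimate and $\Re\beta>\frac{1}{2}$. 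The two-sided ideal property of the nuclear-of-order-zero class then gives the result for each entry, and your finite-matrix observation finishes the lemma; equivalently, expand every summand around the common centre of $D''$ so the sum over $n$ is performed inside each rank-one term, leaving a single index $k$ with geometric decay. With this replacement your outline becomes a correct proof and supplies the details the paper leaves to the cited references.
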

From the identification of hyperbolic conjugacy classes with purely
periodic $\lambda$-fractions we may now calculate the trace of $\mathcal{L}_{\beta}^{l}$
\begin{eqnarray*}
\Tr_{\mathcal{B}}\mathcal{L}_{\beta}^{l} & = & \sum_{i}\Tr_{\mathcal{B}_{i}}\mathcal{L}_{ii,\beta}^{l}=\sum_{j}\sum_{\vec{n}\in\mathcal{N}_{ii}^{l}}\Tr_{\mathcal{B}_{i}}\pi_{\beta}\left(A_{\vec{n}}\right)\\
 & = & \sum_{\RS{\overline{n_{1},n_{2},\ldots,n_{l}}}\in\PP_{l}}\frac{\mathcal{N}\left(ST^{n_{1}}\cdots ST^{n_{l}}\right)^{-s}}{1-\mathcal{N}\left(ST^{n_{1}}\cdots ST^{n_{l}}\right)^{-1}}\end{eqnarray*}
By the standard Grothendieck theory the Fredholm determinant of $1-\mathcal{L}_{\beta}$
is well-defined and can be calculated by \begin{eqnarray*}
\det\left(1-\mathcal{L}_{\beta}\right) & = & \sum_{l=1}^{\infty}\frac{1}{l}\Tr\mathcal{L}_{\beta}^{l}=\sum_{l=1}^{\infty}\frac{1}{l}\sum_{\RS{\overline{n_{1},n_{2},\ldots,n_{l}}}\in\PP_{l}}\frac{\mathcal{N}\left(A_{\vec{n}}\right)^{-\beta}}{1-\mathcal{N}\left(A_{\vec{n}}\right)^{-1}}\end{eqnarray*}
We now need to study the relation between $\lambda$-fractions and
hyperbolic conjugacy classes in more detail. Let $x=\RS{\overline{n_{1},\ldots,n_{l}}}$
correspond to the hyperbolic $A=A_{\vec{n}}=ST^{n_{1}}\cdots ST^{n_{l}}=P_{0}^{m}$
where $P_{0}$ is a primitive hyperbolic and $m=m\left(A\right)$
then $x=\RS{\overline{n_{1},\ldots,n_{l_{0}}}}$ where $l_{0}$ is
the minimal period and $l_{0}m=l$. Furthermore, all shifts, $\RS{\overline{n_{i},n_{i+1},\ldots,n_{i-1}}}$
for $1\le i\le n_{l_{0}}$ belong to the same conjugacy class $\left[A\right]$
and the norm is also constant over conjugacy classes. Let $\mathcal{B}\left(x\right)=\mathcal{B}\left(D_{j}\right)$
where $x\in D_{j}$, set $A_{r}r=r$ and $\mathcal{K}_{\beta}=\pi_{\beta}\left(A_{r}\right)$.
Then \begin{align*}
 & -\log\det\left(1-\mathcal{L}_{\beta}\right)+\log\det\left(1-\mathcal{K}_{\beta}\right)\\
= & \sum_{l=1}^{\infty}\frac{1}{l}\sum_{\vec{n}\in\mathbb{Z}^{l},\, x=\RS{\overline{n_{1},n_{2},\ldots,n_{l}}}\in\PP}\Tr_{\mathcal{B}_{j\left(x\right)}}\pi_{\beta}\left(A_{\vec{n}}\right)-\sum_{l=1}^{\infty}\frac{1}{l}\Tr_{\mathcal{B}\left(r\right)}\pi_{\beta}\left(A_{r}^{l}\right)\\
= & \sum_{l=1}^{\infty}\frac{1}{l}\sum_{\vec{n}\in\mathbb{Z}^{l},\, x=\RS{\overline{n_{1},n_{2},\ldots,n_{l}}}\in\PPr}\Tr_{\mathcal{B}_{j\left(x\right)}}\pi_{\beta}\left(A_{\vec{n}}\right)\\
= & \sum_{l_{0}=1}^{\infty}\frac{1}{l_{0}}\sum_{\vec{n}_{0}\in\mathbb{Z}^{l_{0}},\, x=\RS{\overline{n_{1},\ldots,n_{l_{0}}}}\in\PPr}\frac{1}{m\left(P\right)}\Tr_{\mathcal{B}_{j\left(x\right)}}\pi_{\beta}\left(A_{\vec{n}_{0}}^{m\left(P\right)}\right)\\
= & \sum_{l_{0}=1}^{\infty}\frac{1}{l_{0}}\sum_{\left[P_{0}\right]\in\Hypp}\frac{l_{0}}{m\left(P\right)}\Tr_{\mathcal{B}_{j\left(\vec{n}_{0}\right)}}\pi_{\beta}\left(P_{0}^{m}\right)\\
= & \sum_{\left[P\right]\in\Hyp}\frac{1}{m\left(P\right)}\frac{\mathcal{N}\left(P\right)^{-s}}{1-\mathcal{N}\left(P\right)^{-1}}\end{align*}
and by comparing with (\ref{eq:logZ}) we see that for $\Re\beta>1$
we have $\ln Z_{q}\left(\beta\right)=\log\det\left(1-\mathcal{L}_{\beta}\right)-\log\det\left(1-\mathcal{K}_{\beta}\right)$
and thus \begin{equation}
Z_{q}\left(\beta\right)=\frac{\det\left(1-\mathcal{L}_{\beta}\right)}{\det\left(1-\mathcal{K}_{\beta}\right)}.\label{eq:Z-eq-detL-detK}\end{equation}
But since the right hand side is in fact meromorphic for $\beta\in\mathbb{C}$
this equation provides an analytic continuation of $Z\left(\beta\right)$
for $\beta\in\mathbb{C}$. Note that the operator $\mathcal{K}_{\beta}$
is a simple composition operator and we can evaluate $\det\left(1-\mathcal{K}_{\beta}\right)$
explicitly. It is easy to show that all eigenvalues of $\mathcal{K}_{\beta}$
are of the form $\mu_{n}=\left(2+\lambda R\right)^{-2\left(n+\beta\right)},$
$n\ge0$ and hence\begin{equation}
\det\left(1-\mathcal{K}_{\beta}\right)=\prod_{n\ge0}\left(1-\mu_{n}\right).\label{eq:det(1-K)}\end{equation}
To evaluate the factor $\det\left(1-\mathcal{L}_{\beta}\right)$ we
use the same identity $\det\left(1-\mathcal{L}_{\beta}\right)=\prod_{n\ge1}\left(1-\lambda_{n}\right)$
where $\left\{ \lambda_{n}\right\} _{n\ge1}$ are the eigenvalues
of $\mathcal{L}_{\beta}$ counted with multiplicity. In the next chapter
we will discuss how to calculate eigenvalues of $\mathcal{L}_{\beta}$.

\begin{rem}
The method to relate $Z_{\Gamma}\left(s\right)$ to Fredholm determinants
of nuclear operators can be extended to any finite dimensional representation
$\chi$ of $\Gamma$. The identity (\ref{eq:Z-eq-detL-detK}) will
hold with $Z_{\Gamma},$ $\mathcal{L}_{\beta}$ and $\mathcal{K}_{\beta}$
replaced by $Z_{\Gamma}^{\chi}\left(s\right)=\prod_{\left[P_{0}\right]}\prod_{k}\det\left(1-\chi\left(P\right)\mathcal{N}\left(P\right)^{-s-k}\right),$
respectively $\mathcal{L}_{\beta}^{\chi}$ and $\mathcal{K}_{\beta}^{\chi}$.
Here $\mathcal{L}_{\beta}^{\chi}$ and $\mathcal{K}_{\beta}^{\chi}$
are obtained by replacing $\pi_{\beta}\left(A\right)$ with $\pi_{\beta}^{\chi}\left(A\right)=\chi\left(A\right)\pi_{\beta}\left(A\right)$
in all formulas. The only problem is to obtain explicit expressions
for the truncated operator $\mathcal{A}_{\beta}^{\left(N\right)}$
which will be introduced in the next section. The algorithm has been
implemented and tested for representations induced by the trivial
representation of the Hecke congruence subgroups $\Gamma_{0}\left(p\right)$
with prime $p$. This allowed us to compute e.g. $Z_{\Gamma_{0}\left(p\right)}\left(s\right)$
for $p=2,5$. 
\end{rem}

\section{Analytic Continuation of $\mathcal{L}_{\beta}$ and Computation}

It turns out that the same analysis which enables us to deduce an
analytic continuation of $\mathcal{L}_{\beta}$ to $\beta\in$$\mathbb{C}$
is also vital to compute the eigenvalues of $\mathcal{L}_{\beta}$. 

We follow the same procedure as in e.g. \cite{MR1059321,MR1691529}
to demonstrate that $\mathcal{L}_{\beta}$ admits a meromorphic extension
to the whole complex plane. First of all we have to change domains
once more. To make some of the calculations easier it is desirable
to work with functions which have power series expansions around zero.
For this purpose we choose open disks $\tilde{D}_{i}\supset D_{i}$
such that $0\in\tilde{D}_{i}$ and $ST^{n}\tilde{D}_{i}\subset\tilde{D}_{j}$
for $n\in\mathcal{N}_{ij}$. That this construction is possible is
shown in \cite{cf_transferoperator}. If $\tilde{\mathcal{B}}_{i}=\mathcal{B}\left(\tilde{D}_{i}\right)$
then $f_{i}\in\tilde{\mathcal{B}}_{i}$ has a power series expansion
centered at zero. 

If $i\in\mathcal{J}_{\np}$ it can be shown that either $\mathcal{N}_{ij}=\left\{ n_{ij}\in\mathbb{Z}\backslash\left\{ 0\right\} \right\} $
or $\mathcal{N}_{ij}=\emptyset$ for $1\le j\le\np-1$, $\mathcal{N}_{i\np}=\left\{ n\in\mathbb{Z}\,|\, n\ge n_{i\np}\right\} $
for some $n_{i\np}\ge1$ and $\mathcal{N}_{i,-j}=-\mathcal{N}_{-i,j}$.
Let $1\le i\le\np$ and consider $\mathcal{L}_{\beta,ij}:\tilde{\mathcal{B}}_{j}\rightarrow\tilde{\mathcal{B}}_{i}.$
Let $f\in\tilde{\mathcal{B}}_{j}$ and $N\ge1$. Taylor's theorem
with remainder gives $f\left(z\right)=\sum_{k=0}^{N}a_{k}z^{k}+R_{N}\left(z\right)$
with $R_{N}\left(z\right)=O\left(\left|z\right|^{N+1}\right)$. Then
\begin{eqnarray*}
\mathcal{L}_{\beta,i\np}f\left(z\right) & = & \sum_{n\in\mathcal{N}_{i\np}}\pi_{\beta}\left(ST^{n}\right)f\left(z\right)=\sum_{n\ge n_{i\np}}\pi_{\beta}\left(ST^{n}\right)f\left(z\right)\\
 & = & \sum_{n\ge n_{i\np}}\left(\frac{1}{z+n\lambda}\right)^{2\beta}f\left(\frac{-1}{z+n\lambda}\right)\\
 & = & \sum_{n\ge n_{i\np}}\left(\frac{1}{z+n\lambda}\right)^{2\beta}\left[\sum_{k=0}^{N}a_{k}\left(\frac{-1}{z+n\lambda}\right)^{k}+R_{N}\left(\frac{1}{z+n\lambda}\right)\right]\\
 & = & \mathcal{A}_{\beta,i\np}^{\left(N\right)}f\left(z\right)+\mathcal{L}_{\beta,i\np}^{(N)}f\left(z\right)\end{eqnarray*}
where $\mathcal{L}_{\beta,i\np}^{\left(N\right)}f\left(z\right)=\mathcal{L}_{\beta,i\np}\left[f\left(z\right)-\sum_{k=1}^{N}a_{k}z^{k}\right]$
is analytic for $\Re\beta>\frac{1-N}{2}$ and in fact nuclear of order
$0$. We also have $\left\Vert \mathcal{L}_{\beta,i\np}^{\left(N\right)}f\left(z\right)\right\Vert _{\infty}\le C\sum_{n\ge n_{i\np}}\left|\frac{1}{n\lambda}\right|^{2\Re\beta+N+1}\rightarrow0$
as $N\rightarrow\infty.$ The operator $\mathcal{A}_{\beta,i\np}^{\left(N\right)}$
on the other hand can be written as \begin{eqnarray*}
\mathcal{A}_{\beta,i\np}^{(N)}f\left(z\right) & = & \sum_{k=0}^{N}\left(-1\right)^{k}a_{k}\sum_{n\ge n_{i\np}}\left(\frac{1}{z+n\lambda}\right)^{2\beta+k}\\
 & = & \sum_{k=0}^{N}\left(-1\right)^{k}a_{k}\lambda^{-2\beta-k}\zeta\left(2\beta+k,\frac{z}{\lambda}+n_{i\np}\right)\end{eqnarray*}
where $\zeta\left(s,z\right)$ is the Hurwitz zeta function. It is
known that for any $z\in\mathbb{C}$ the function $\zeta\left(s,z\right)$
is meromorphic with only one simple pole at $s=1$ with residue $1$.
Hence $\mathcal{A}_{\beta,i\np}^{(N)}$ is of finite rank and meromorphic
in $\beta$ with at most simple poles at the points $\beta_{k}=\frac{-k+1}{2},$
$0\le k\le N$. For the operator corresponding to $\mathcal{N}_{ij}=\left\{ n_{ij}\right\} $
one has \begin{eqnarray*}
\mathcal{L}_{\beta,ij}f\left(z\right) & = & \pi_{\beta}\left(ST^{n_{ij}}\right)f\left(z\right)=\left(z+n_{ij}\lambda\right)^{-2\beta}f\left(\frac{-1}{z+n_{ij}\lambda}\right)\\
 & = & \left(z+n_{ij}\lambda\right)^{-2\beta}\left[\sum_{k=0}^{N}\left(\frac{-1}{z+n_{ij}\lambda}\right)^{k}+R_{N}\left(\frac{-1}{z+n_{ij}\lambda}\right)\right]\\
 & = & \mathcal{A}_{\beta,ij}^{\left(N\right)}f\left(z\right)+\mathcal{L}_{\beta,ij}^{(N)}f\left(z\right)\end{eqnarray*}
where $\mathcal{A}_{\beta,ij}^{\left(N\right)}f\left(z\right)=$$\sum_{k=0}^{N}a_{k}\left(-1\right)^{k}\left(z+n_{ij}\right)^{-k-2\beta}$
and $\mathcal{L}_{\beta,ij}^{\left(N\right)}f\left(z\right)=\left(z+n_{ij}\lambda\right)^{-2\beta}R_{N}\left(\frac{-1}{z+n_{ij}\lambda}\right)=O\left(\left|z+n_{ij}\lambda\right|^{-N-1-2\Re\beta}\right)$.
It is clear that in this case $\mathcal{A}_{\beta,ij}^{\left(N\right)}$
is entire of finite rank and that $\mathcal{L}_{\beta,ij}^{\left(N\right)}$
is entire and nuclear of order $0$. 

Since $N\ge1$ was arbitrary, we conclude that all components $\mathcal{L}_{\beta,jk}$,
have meromorphic continuations to the entire complex plane with at
most simple poles at the points $\beta_{k}=\frac{-k+1}{2},$ $k=1,\ldots$.
The same clearly holds true for the operator $\mathcal{L}_{\beta}$.
Note, that in the determinant $\det\left(1-\mathcal{L}_{\beta}\right)$
poles may well cancel against zeros due to the presence of eigenvalues
equal to one.

\subsection{Computation of $\mathcal{A}_{\beta}^{\left(N\right)}$}

Let $\mathcal{L}_{\beta}=\mathcal{A}_{\beta}^{\left(N\right)}+\mathcal{L}_{\beta}^{\left(N\right)}$
where $\mathcal{A}_{\beta}^{\left(N\right)}$ and $\mathcal{L}_{\beta}^{\left(N\right)}$
have the components $\mathcal{A}_{\beta,jk}^{\left(N\right)}$ respectively
$\mathcal{L}_{\beta,jk}^{\left(N\right)}$ given above. To obtain
a numerical approximation of $\det\left(1-\mathcal{L}_{\beta}\right)$
it is necessary to approximate the spectrum of $\mathcal{L}_{\beta}$.
For this purpose we construct another finite rank approximation of
$\mathcal{A}_{\beta}^{\left(N\right)}$ in terms of a matrix which
is more suitable for computations. 

Let $\mathcal{P}_{N}$ be the space of polynomials of degree less
than or equal to $N$. Then $\mathcal{P}_{N}$ is a subspace of all
$\mathcal{B}_{i}$'s and we let $\Pi_{N}$ denote the projection $\mathcal{B}_{i}\rightarrow\mathcal{P}_{N}$
given by truncation of the power series, i.e. $\Pi_{N}\left(\sum_{k=0}^{\infty}a_{k}z^{k}\right)=\sum_{k=0}^{N}a_{k}z^{k}$.
We will also use $\Pi_{N}$ to denote the projection from $\mathcal{B}=\bigoplus_{i\in\mathcal{J}_{\np}}\mathcal{B}_{i}$
to the space $\bigoplus_{i\in\mathcal{J}_{\np}}\mathcal{P}_{N}$ obtained
by truncating each component. 

We saw, that $\mathcal{A}_{\beta}^{\left(N\right)}$ maps $\bigoplus_{i\in\mathcal{J}_{\np}}\mathcal{P}_{N}=:\mathcal{P}_{N}^{2\np}$
into a space spanned by Hurwitz zeta functions. By applying \emph{$\Pi_{N}$}
to the resulting expression we obtain an operator $\mathcal{A}_{\beta}^{\left(N,N\right)}:\mathbb{C}^{2\np\left(N+1\right)}\rightarrow\mathbb{C}^{2\np\left(N+1\right)}$
which can be represented by a $\np_{N}\times\np_{N}$ complex matrix
where $\np_{N}=2\np\left(N+1\right)$. This construction will now
be explained in detail.

Let $N\ge1$ be a fixed integer, then with the notation as above \begin{eqnarray}
\mathcal{A}_{\beta,i\np}^{\left(N\right)}f\left(z\right) & = & \sum_{k=0}^{N}a_{k}\frac{\left(-1\right)^{k}}{\lambda^{k+2\beta}}\zeta\left(k+2\beta,\frac{z}{\lambda}+n_{i\np}\right)\label{eq:A-eq-Hzeta_sum}\\
 & = & \sum_{k=0}^{N}a_{k}\frac{\left(-1\right)^{k}}{\lambda^{k+2\beta}}\sum_{n=0}^{\infty}\frac{\left(-1\right)^{n}\left(k+2\beta\right)_{n}}{n!\lambda^{n}}\zeta\left(2\beta+k+n,n_{i\np}\right)z^{n}\nonumber \\
 & = & \sum_{n=0}^{\infty}z^{n}\sum_{k=0}^{N}a_{k}\alpha_{i\np,nk}\nonumber \end{eqnarray}
where $\alpha_{i\np,nk}=\alpha_{i\np,nk}\left(\beta\right)=\frac{\left(-1\right)^{k+n}}{n!\lambda^{n+k+2\beta}}\left(k+2\beta\right)_{n}\zeta\left(2\beta+k+n,n_{i\np}\right)$.
For $i\in\mathcal{J}_{\np}$ and $1\le j\le\np-1$ we get \begin{eqnarray*}
\mathcal{A}_{\beta,ij}^{\left(N\right)}f\left(z\right) & = & \sum_{k=0}^{N}a_{k}\left(-1\right)^{k}\left(z+n_{ij}\lambda\right)^{-2\beta-k}\\
 & = & \sum_{k=0}^{N}a_{k}\left(-1\right)^{k}\sum_{n=0}^{\infty}\frac{\left(-1\right)^{n}\left(2\beta+k\right)_{n}}{n!\left(n_{ij}\lambda\right)^{2\beta+k+n}}z^{n}\\
 & = & \sum_{n=0}^{\infty}z^{n}\sum_{k=0}^{N}a_{k}\alpha_{ij,nk}\end{eqnarray*}
where $\alpha_{ij,nk}=\alpha_{ij,nk}\left(\beta\right)=\frac{\left(-1\right)^{n+k}}{n!\lambda^{2\beta+k+n}}\left(2\beta+k\right)_{n}n_{ij}^{-2\beta-k-n}$.
For $n\le-1$ we use a slightly modified definition of $\pi_{\beta}\left(ST^{n}\right),$
namely $\pi_{\beta}\left(ST^{-n}\right)f\left(z\right)=\left(\left(-z+n\lambda\right)^{-2}\right)^{\beta}f\left(\frac{1}{-z+n\lambda}\right)$.
It is then easy to see that $\alpha_{ij,nk}=\left(-1\right)^{n+k}\alpha_{i-j,nk}$
for $1\le j\le\np$. 

By truncating the sum over $n$ at $N$ in the formula for $\mathcal{A}_{\beta,ij}^{\left(N\right)}$
we get operators $\mathcal{A}_{\beta,ij}^{\left(N,N\right)}:\mathcal{P}_{N}\rightarrow\mathcal{P}_{N}$
and $\mathcal{A}_{\beta}^{\left(N,N\right)}=\left(\mathcal{A}_{\beta,ij}^{\left(N,N\right)}\right)_{i,j\in\mathcal{J}_{\np}}$.
Then $\mathcal{A}_{\beta}^{\left(N\right)}=\mathcal{L}_{\beta}\circ\Pi_{N}$
and $\mathcal{A}_{\beta}^{\left(N,N\right)}=\Pi_{N}\circ\mathcal{L}_{\beta}\circ\Pi_{N}$
and by the identification $\mathcal{P}_{N}^{2\np}\cong\mathbb{C}^{2\np\left(N+1\right)}$
it is clear that $\mathcal{A}^{\left(N,N\right)}:\mathcal{P}_{N}\rightarrow\mathcal{P}_{N}$
can be represented by the $\np_{N}\times\np_{N}$ complex matrix \[
A=\left(\alpha_{ij,nk}\right)_{i,j\in\mathcal{J}_{\np},\,0\le n,k\le N}.\]
In the next section we will discuss the relation between the eigenvalues
of $\mathcal{L}_{\beta}$ and those of $\mathcal{A}_{\beta}^{\left(N,N\right)}.$

\subsection{Approximation of the spectrum of $\mathcal{L}_{\beta}$}

Let $\Sigma_{\beta}$ denote the spectrum of the operator $\mathcal{L}_{\beta}$
and $\Sigma_{\beta}^{N}$ the spectrum of $\mathcal{A}_{\beta}^{\left(N,N\right)}$.
Since $\mathcal{L}_{\beta}$ is nuclear, setting $\Pi^{N}=Id-\Pi_{N}$
then $\Pi^{N}$ is bounded and it is easy to verify, that the conditions
of Theorem 2 and Proposition 3 in Baladi and Holschneider \cite{MR1690191}
are satisfied by the approximations $\mathcal{A}_{\beta}^{\left(N,N\right)},$
$N\ge1.$ Hence the following Lemma can be deduced: 

\begin{lem}
\label{lem:eigenval_approx}Let $\vec{f}\in\mathcal{B}$ be an eigenfunction
of $\mathcal{L}_{\beta}$ corresponding to the eigenvalue $\lambda_{\beta}\in\Sigma_{\beta}$
with algebraic multiplicity $d$. Then there exists $N_{0}\ge0$ such
that for all $N\ge N_{0}$ there exist eigenvalues $\lambda_{N,j}\in\Sigma_{\beta}^{N}$
with corresponding eigenfunctions $\vec{f}_{N,j},$ $1\le j\le l$
such that the sum of the algebraic multiplicities of $\lambda_{N,j}$
equals $d$ and \[
\max_{1\le j\le l}\left(\left|\lambda-\lambda_{N,j}\right|,\left\Vert \vec{f}-\vec{f}_{N,j}\right\Vert \right)\le c\left(N\right),\]
where $c\left(N\right)\rightarrow0$ as $N\rightarrow\infty$. 
\end{lem}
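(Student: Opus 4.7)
The plan is to reduce the claim to the abstract finite-rank approximation theorem of Baladi and Holschneider cited above, which gives exactly this kind of eigenvalue/eigenfunction convergence once suitable operator-norm estimates are in place. First I would decompose
\[
\mathcal{L}_{\beta} - \mathcal{A}_{\beta}^{(N,N)} = (\Id-\Pi_{N})\mathcal{L}_{\beta} + \Pi_{N}\mathcal{L}_{\beta}(\Id-\Pi_{N}) = \Pi^{N}\mathcal{L}_{\beta} + \Pi_{N}\mathcal{L}_{\beta}\Pi^{N},
\]
and note that on each $\tilde{\mathcal{B}}_{i}$ the map $\Pi_{N}$ is the truncation of the power series centred at $0$, so $\Pi_{N}\to\Id$ strongly and $\|\Pi_{N}\|\le 1$ uniformly in $N$.

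Next I would establish that $\|\Pi^{N}\mathcal{L}_{\beta}\|\to 0$ as $N\to\infty$. This follows componentwise from the tail bounds already derived in the previous subsection: for $f\in\tilde{\mathcal{B}}_{j}$ with $\|f\|\le 1$, the Taylor-remainder estimates give
\[
\|\mathcal{L}_{\beta,i\np}^{(N)}f\|_{\infty} \le C\sum_{n\ge n_{i\np}}(n\lambda)^{-2\Re\beta - N - 1}, \qquad \|\mathcal{L}_{\beta,ij}^{(N)}f\|_{\infty} = O\bigl((n_{ij}\lambda)^{-2\Re\beta - N - 1}\bigr).
\]
Summing the first bound over $n\ge n_{i\np}$ and the second over the finite range $j\in\mathcal{J}_{\np}$ shows that each component of $\Pi^{N}\mathcal{L}_{\beta}$ has norm tending to zero; since $\Pi_{N}$ is a uniformly bounded projection, the same decay transfers to $\Pi_{N}\mathcal{L}_{\beta}\Pi^{N}$, so the full difference $\mathcal{L}_{\beta}-\mathcal{A}_{\beta}^{(N,N)}$ tends to zero in operator norm with an explicit geometric rate in $N$.

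With these bounds in hand, the hypotheses of Theorem~2 and Proposition~3 of \cite{MR1690191} are satisfied: a sequence of finite-rank operators $\mathcal{A}_{\beta}^{(N,N)}$ converges in operator norm to the nuclear-of-order-zero operator $\mathcal{L}_{\beta}$. Their perturbation result then yields, for every isolated eigenvalue $\lambda_{\beta}\in\Sigma_{\beta}$ of algebraic multiplicity $d$, some $N_{0}\ge 0$ and a cluster $\{\lambda_{N,j}\}_{j=1}^{l}\subset\Sigma_{\beta}^{N}$ (for $N\ge N_{0}$) whose algebraic multiplicities sum to $d$, together with corresponding eigenfunctions $\vec{f}_{N,j}$ converging to a basis of the generalized eigenspace at $\lambda_{\beta}$, all with a common rate $c(N)$ controlled by $\|\mathcal{L}_{\beta}-\mathcal{A}_{\beta}^{(N,N)}\|$.

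The main obstacle is really the quantitative eigenvector statement rather than eigenvalue convergence per se: norm convergence alone already implies convergence of the spectrum outside any neighbourhood of $0$ by standard continuity results for compact operators, but producing a consistent basis of a (possibly generalized) eigenspace requires the spectral projectors defined by a small contour around $\lambda_{\beta}$ separating it from the rest of $\Sigma_{\beta}$, and one must show that these projectors converge in norm together with the operators. This is precisely what the resolvent estimates of \cite{MR1690191} supply, so I would invoke their result as a black box rather than redo the spectral-projector analysis here.
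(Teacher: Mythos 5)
Your proposal follows essentially the same route as the paper, which likewise deduces the lemma by checking that the truncations $\mathcal{A}_{\beta}^{(N,N)}=\Pi_{N}\circ\mathcal{L}_{\beta}\circ\Pi_{N}$ satisfy the hypotheses of Theorem 2 and Proposition 3 of Baladi--Holschneider and then quoting their perturbation result for the eigenvalue cluster, its total algebraic multiplicity and the eigenfunctions. Two minor inaccuracies in your verification do not affect the conclusion: the Taylor-truncation projection $\Pi_{N}$ is not a contraction in the supremum norm (its norm grows like $\log N$, which is harmless against the geometric decay of the tails), and the remainder estimates you quote from the paper bound $\mathcal{L}_{\beta}\Pi^{N}$ rather than $\Pi^{N}\mathcal{L}_{\beta}$, the latter being controlled instead by the fact that $\mathcal{L}_{\beta}$ maps the unit ball of $\mathcal{B}$ into functions holomorphic on strictly larger disks, so their Taylor remainders decay geometrically.
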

\begin{defn}
If $\lambda_{N,j}\in\Sigma_{\beta}^{N}$ is one of the eigenvalues
in Lemma \ref{lem:eigenval_approx} approximating a $\lambda_{\beta}\in\Sigma_{\beta}$
then $\lambda_{N,j}$ is said to be \emph{regular}, otherwise it is
said to be \emph{spurious}. 
\end{defn}
A problem in computing the spectra of $\mathcal{L}_{\beta}$ using
$\mathcal{A}_{\beta}^{\left(N,N\right)}$ is that we do not know a
priori which eigenvalues of $\mathcal{A}_{\beta}^{\left(N,N\right)}$
are regular and which are spurious. A trivial consequence of Lemma
\ref{lem:eigenval_approx} is the following Lemma which gives a necessary
condition for a sequence of eigenvalues $\lambda_{N_{i},j_{i}}\in\Sigma_{\beta}^{N_{i}}$
to be regular.

\begin{lem}
\label{lem:lambda_get_closer}Let $\left\{ \lambda_{N_{i},j_{i}}\right\} _{i\ge1}$
be a sequence of eigenvalues of $\mathcal{A}_{\beta}^{\left(N_{i},N_{i}\right)}$
such that $\lambda_{N_{i},j_{i}}\rightarrow\lambda\in\Sigma_{\beta}$
as $j\rightarrow\infty$. Then for any $\epsilon>0$ there exists
$N_{0}\ge0$ such that $\left|\lambda_{N_{i},j_{i}}-\lambda_{M_{i},j_{i}}\right|<\epsilon$
for all $N_{i},M_{i}\ge N_{0}$. 
\end{lem}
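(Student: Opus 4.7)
The plan is to deduce the Cauchy-type conclusion directly from the hypothesis of convergence; no further structural information about $\mathcal{A}_\beta^{(N,N)}$ or $\mathcal{L}_\beta$ will be needed. First I would unpack the definition of convergence: the hypothesis $\lambda_{N_i, j_i} \to \lambda$ as $i \to \infty$ gives, for every $\epsilon > 0$, an index $i_0 \geq 1$ such that $|\lambda_{N_i, j_i} - \lambda| < \epsilon/2$ whenever $i \geq i_0$.

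Next, I would pass to a non-decreasing subsequence of $\{N_i\}$ (permissible since the convergence to $\lambda$ is preserved along subsequences), and set $N_0 := N_{i_0}$. Then for any indices $N_i, M_i \geq N_0$ along the sequence, the corresponding $i$'s are at least $i_0$, so applying the triangle inequality yields
\[
|\lambda_{N_i, j_i} - \lambda_{M_i, j_i}| \;\leq\; |\lambda_{N_i, j_i} - \lambda| + |\lambda - \lambda_{M_i, j_i}| \;<\; \tfrac{\epsilon}{2} + \tfrac{\epsilon}{2} \;=\; \epsilon,
\]
which is the desired conclusion.

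Since the claim is essentially just the Cauchy criterion for a convergent sequence in $\mathbb{C}$, no serious obstacle is anticipated; the entire content is the triangle inequality. The role of Lemma \ref{lem:eigenval_approx} here is only to guarantee that convergent sequences of this kind actually exist in the first place: for each $\lambda_\beta \in \Sigma_\beta$ of algebraic multiplicity $d$, that lemma produces eigenvalues of $\mathcal{A}_\beta^{(N,N)}$ approaching $\lambda_\beta$ with quantitative error $c(N) \to 0$. The present lemma will then function as the practical numerical criterion of \emph{regularity} defined above: stability of a computed eigenvalue across different truncation levels $N$ and $M$ is a necessary feature of any regular eigenvalue, and the contrapositive (failure of this Cauchy condition) is what will be used in the implementation to discard spurious eigenvalues.
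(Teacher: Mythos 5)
Your argument is correct and is essentially what the paper intends: the paper gives no explicit proof, calling the lemma a ``trivial consequence'' of Lemma \ref{lem:eigenval_approx}, and the content is exactly your triangle-inequality/Cauchy observation that two eigenvalues each within $\epsilon/2$ of the limit $\lambda$ are within $\epsilon$ of each other. One small remark: passing to a non-decreasing subsequence is unnecessary (and strictly speaking weakens the stated conclusion); instead simply take $N_{0}>\max\left\{ N_{i}\,|\, i<i_{0}\right\} $, so that $N_{i}\ge N_{0}$ forces $i\ge i_{0}$ and the same estimate applies to the original sequence.
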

\begin{rem}
Let $\Sigma_{\beta}=\left\{ \lambda_{\beta,n}\right\} _{n\ge1}$ (where
eigenvalues are counted with multiplicity). By Bandtlow-Jenkinson
\cite{MR2350441,B-J_explicit_ev_est} there exist positive constants
$A,c$ such that $\left|\lambda_{\beta,n}\right|\le Ae^{-cn}$. Numerically
we fond that a similar bound seems to hold for the operators $\mathcal{A}_{\beta}^{\left(N,N\right)}.$
It follows that $0$ is a limit point of $\Sigma_{\beta}$ and there
exist many sequences of spurious eigenvalues $\left\{ \lambda_{N_{i},j_{i}}\right\} $
converging to $0$.
\end{rem}
We will now present an algorithm which uses Lemma \ref{lem:lambda_get_closer}
to compute an approximation to $Z_{q}\left(s\right)$, but to put
Lemma \ref{lem:lambda_get_closer} into praxis we first need to make
the following heuristic claims. 

\begin{claim}
\label{cla:There-is-no-spurious-conv}There is no sequence $\lambda_{N_{i},j_{i}}\in\Sigma_{\beta}^{N_{i}},$$i\ge1$
such that $\lambda_{N_{i},j_{i}}\rightarrow\lambda$ unless $\lambda\in\Sigma_{\beta}$
or $\lambda=0$. 
\end{claim}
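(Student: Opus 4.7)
The plan is to reduce the claim to a statement of upper semicontinuity of the spectrum under norm convergence of compact operators. Concretely, one wants to show that $\mathcal{A}_\beta^{(N,N)} \to \mathcal{L}_\beta$ in operator norm on $\mathcal{B}$ (or at least on the slightly larger space $\tilde{\mathcal{B}} = \bigoplus_{i\in\mathcal{J}_\np}\tilde{\mathcal{B}}_i$ in which power-series truncation makes sense). Once norm convergence is established, a standard compact-operator result (e.g. Kato, \emph{Perturbation Theory for Linear Operators}, Ch.~IV) implies that for every $r>0$ the portions of the spectra $\Sigma_\beta^N \cap \{|z|>r\}$ converge in the Hausdorff metric (with algebraic multiplicities) to $\Sigma_\beta \cap \{|z|>r\}$. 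The claim follows immediately: any accumulation point $\lambda$ of a sequence $\lambda_{N_i,j_i}$ with $|\lambda|>0$ must lie in $\Sigma_\beta$.

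The first key step is to show that the truncation projection $\Pi_N$ converges strongly to the identity on $\tilde{\mathcal{B}}_i$. For $f\in\tilde{\mathcal{B}}_i = \mathcal{B}(\tilde{D}_i)$, the Taylor series at $0$ converges uniformly on every compact subset of $\tilde{D}_i$; since $\overline{D}_i \subset \tilde{D}_i$ is compact, the partial sums $\Pi_N f$ converge to $f$ uniformly on $\overline{D}_i$. Combined with the uniform boundedness of $\Pi_N$ (which follows from a Cauchy-integral estimate on the annulus $\overline{D}_i \subset \tilde{D}_i$), one obtains $\Pi_N \to I$ strongly. The second key step uses compactness of $\mathcal{L}_\beta$: whenever $P_N\to I$ strongly and $K$ is compact, both $P_N K$ and $KP_N$ converge to $K$ in operator norm, hence $\Pi_N \mathcal{L}_\beta \Pi_N \to \mathcal{L}_\beta$ in norm. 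Writing $\mathcal{A}_\beta^{(N,N)} = \Pi_N \mathcal{L}_\beta \Pi_N$ on the subspace $\mathcal{P}_N^{2\np}$ where it is defined, one concludes norm convergence on the range relevant to the spectral question.

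The main obstacle is that for $\Re\beta$ not large enough, $\mathcal{L}_\beta$ itself is only defined via the meromorphic continuation $\mathcal{L}_\beta = \mathcal{A}_\beta^{(N)} + \mathcal{L}_\beta^{(N)}$, and one must verify that this continuation still has the structure $\Pi_N \mathcal{L}_\beta \Pi_N \approx \mathcal{A}_\beta^{(N,N)}$ up to a remainder that vanishes as $N\to\infty$. The natural way to handle this is to fix an auxiliary large integer $M$ so that $\mathcal{L}_\beta^{(M)}$ is well-defined and nuclear of order zero in a neighborhood of the prescribed $\beta$, express $\mathcal{A}_\beta^{(M)}$ in closed form via Hurwitz zeta functions, and then exploit the rapid decay of the remainder coefficients $\alpha_{ij,nk}$ (as $n\to\infty$ with $k$ fixed) provided by the factor $\lambda^{-n}/n!$ in the expansion. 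This should yield a bound of the form $\|\mathcal{A}_\beta^{(N,N)} - \mathcal{L}_\beta\| \le c_1(\beta)\, c_2^{N}$ with $c_2 <1$, uniformly on compact subsets of $\beta$ avoiding the poles $\beta_k = (1-k)/2$.

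This is precisely where the argument becomes heuristic rather than rigorous: the claim implicitly asks that \emph{no} non-zero spurious accumulation occurs, which is stronger than the upper-semicontinuity one obtains from Baladi--Holschneider alone, and a completely rigorous verification would require quantitative resolvent estimates for $\mathcal{A}_\beta^{(N,N)}$ uniform in $N$. In practice the author would presumably argue that (i) the Baladi--Holschneider theorem already rules out sequences converging from \emph{outside} the spectrum, and (ii) the expectation that pseudospectral instabilities are confined to a small neighborhood of $0$ (consistent with the exponential eigenvalue decay noted in the preceding remark) prevents spurious accumulation at any $\lambda\ne0$.
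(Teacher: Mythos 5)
First, be aware that the paper does not prove this statement at all: Claim \ref{cla:There-is-no-spurious-conv} is introduced explicitly as a \emph{heuristic} claim, and the only support offered is the a posteriori numerical test via the functional equation (the ``$\varphi$-test''), together with the observation in the error-analysis section that at insufficient working precision the claim effectively fails (spurious eigenvalues that are nearly stationary in $N$ do occur, cf.\ the $WP=50$, $N=200$ row of Table \ref{tab:CompDN}). So there is no paper proof to match; the question is whether your sketch closes the gap the author deliberately left open. It does not, although the strategy is the natural one: if one really had $\|\mathcal{A}_\beta^{(N,N)}-\mathcal{L}_\beta\|\to 0$ in operator norm on a fixed Banach space, then upper semicontinuity of spectra (plus discreteness of the nonzero spectrum of a nuclear operator) would indeed rule out spurious accumulation away from $0$, and would in fact subsume Lemma \ref{lem:eigenval_approx}, which via Baladi--Holschneider only gives the opposite (lower semicontinuity) direction.

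The concrete gaps are in the norm-convergence step. (i) Strong convergence of $\Pi_N$ plus uniform boundedness gives $\Pi_N K\to K$ in norm for compact $K$, but the other factor $K\Pi_N\to K$ requires strong convergence of the adjoints $\Pi_N^*$ on the dual (equivalently an equicontinuity/collective-compactness argument); this is not automatic and you do not address it. (ii) The uniform boundedness of $\Pi_N$ itself is false on a single space $\mathcal{B}(D)$ with the sup norm (Taylor partial-sum projections have Lebesgue constants growing like $\log N$); your fix via the two radii forces you to treat $\Pi_N$ as a map $\mathcal{B}(\tilde D_i)\to\mathcal{B}(D_i)$, but then $\mathcal{A}_\beta^{(N,N)}$ and $\mathcal{L}_\beta$ no longer act on the same space and the spectral comparison has to be re-founded on one fixed space. (iii) There is a geometric obstruction you assume away: the expansions are centered at $0$, which is an \emph{endpoint} of the Markov partition ($\phi_\kappa=0$), while the disks $D_i$ have diameters covering the $\mathcal{I}_i$; uniform convergence of $\Pi_N f$ on $\overline D_i$ needs $\overline D_i$ to lie inside the largest disk about $0$ contained in $\tilde D_i$, an extra requirement on the construction of the $D_i,\tilde D_i$ that must also remain compatible with $ST^n\tilde D_i\subset\tilde D_j$, and which neither the paper nor your sketch establishes. (iv) For $\Re\beta\le\frac12$, where $\mathcal{L}_\beta$ exists only through the meromorphic continuation, your bound $\|\mathcal{A}_\beta^{(N,N)}-\mathcal{L}_\beta\|\le c_1(\beta)c_2^N$ is asserted rather than derived, and you concede this yourself. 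As it stands, then, your text is a plausible programme for a rigorous proof, not a proof; its status is essentially the same as the paper's, which is precisely why the author relies on the independent $\varphi_q$ check rather than on an argument of this kind.
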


\begin{claim}
\label{cla:no-gaps}Suppose that $\Sigma_{\beta}^{N}=\left\{ \lambda_{N,i}\right\} _{1\le i\le\np_{N}}$,
$\left|\lambda_{N,n_{1}}\right|\ge\left|\lambda_{N,n_{2}}\right|\ge\cdots\ge\left|\lambda_{N,n_{K}}\right|>0$
are regular eigenvalues with an estimated error $<\epsilon$ and $\left|\lambda_{N,n_{K}}\right|<\epsilon$.
Then there does not exist an eigenvalue $\lambda_{\beta}\in\Sigma_{\beta}$
in the region $\left\{ z\in\mathbb{C}\,|\,\left|z\right|\ge\left|\lambda_{N,n_{K}}\right|,\,\max_{i=1,\ldots,K}\left|z-\lambda_{N,n_{i}}\right|>\epsilon\right\} $.
I.e. the sequence $\left\{ \lambda_{N,n_{i}}\right\} _{1\le i\le K}$
approximates all eigenvalues of $\mathcal{L}_{\beta}$ with absolute
value greater than or equal to $\left|\lambda_{N,n_{K}}\right|$.
\end{claim}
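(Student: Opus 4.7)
The plan is to argue by contradiction, combining Lemma~\ref{lem:eigenval_approx} with the companion heuristic Claim~\ref{cla:There-is-no-spurious-conv}. Suppose, for a contradiction, that some $\lambda_\beta\in\Sigma_\beta$ were to lie in the alleged gap, so that $|\lambda_\beta|\ge|\lambda_{N,n_K}|$ while $|\lambda_\beta-\lambda_{N,n_i}|>\epsilon$ for every $i=1,\ldots,K$. Applying Lemma~\ref{lem:eigenval_approx} to $\lambda_\beta$ would produce, for every sufficiently large $M$, eigenvalues $\mu_{M,1},\ldots,\mu_{M,\ell}\in\Sigma_\beta^M$ whose algebraic multiplicities sum to that of $\lambda_\beta$ and which satisfy $|\mu_{M,i}-\lambda_\beta|\le c(M)$ with $c(M)\to 0$. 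Taking $M$ large enough that $c(M)<\epsilon/3$ places the $\mu_{M,i}$ in a ball of radius $\epsilon/3$ around $\lambda_\beta$; each $\mu_{M,i}$ would then be separated from every $\lambda_{N,n_j}$ by more than $\epsilon/2$, and would also satisfy $|\mu_{M,i}|\ge|\lambda_\beta|-\epsilon/3$.

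I would then invoke Lemma~\ref{lem:lambda_get_closer} together with Claim~\ref{cla:There-is-no-spurious-conv}: the cluster $\{\mu_{M,i}\}_M$ is Cauchy in $M$, is bounded away from $0$, and its limit is the spectral point $\lambda_\beta\in\Sigma_\beta$. Under the algorithmic rule that a numerical eigenvalue of $\mathcal{A}_\beta^{(N,N)}$ is declared regular precisely when it is Cauchy across $N$ to within the prescribed tolerance and has modulus above the cut-off $|\lambda_{N,n_K}|$, the $\mu_{M,i}$ would therefore have to appear in the regular list. But by the separation established above, none of them equals any $\lambda_{N,n_i}$, contradicting the hypothesis that $\{\lambda_{N,n_i}\}_{i=1}^K$ already enumerates all regular eigenvalues of modulus at least $|\lambda_{N,n_K}|$.

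The principal obstacle is making the selection step of the previous paragraph into a theorem, i.e.\ proving that the numerical criterion really captures \emph{every} genuine approximating sequence and no spurious one. This appears to require two inputs that are currently unavailable: first, an effective, computable bound on the rate $c(N)$ appearing in Lemma~\ref{lem:eigenval_approx}, so that one can decide a priori how close two successive numerical approximants must be before they can be safely identified with a true spectral point; and second, a quantitative spectral-gap estimate for $\mathcal{L}_\beta$, preventing two distinct eigenvalues from being conflated within the tolerance $\epsilon$. The Bandtlow--Jenkinson envelope $|\lambda_{\beta,n}|\le Ae^{-cn}$ quoted after Lemma~\ref{lem:lambda_get_closer} is far too coarse for either purpose. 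It is precisely this lack of effective control, together with the fact that Claim~\ref{cla:There-is-no-spurious-conv} is itself only heuristic, that forces the present statement to be advertised as heuristic rather than as a theorem; a fully rigorous version would, I expect, demand Kato-style resolvent estimates for the finite-rank family $\mathcal{A}_\beta^{(N,N)}$ combined with quantitative eigenvalue-separation bounds for $\mathcal{L}_\beta$.
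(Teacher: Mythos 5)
The paper offers no proof of this statement: it is introduced explicitly as one of two \emph{heuristic} claims ("to put Lemma \ref{lem:lambda_get_closer} into praxis we first need to make the following heuristic claims"), and the Discussion section stresses that the whole algorithm "relies on the heuristic Claims \ref{cla:There-is-no-spurious-conv} and \ref{cla:no-gaps}" and that no internal consistency test can certify correctness. Its only support in the paper is the external numerical check via the functional equation, comparing $\tilde{\varphi}_{q}(s)=\tilde{Z}_{q}(1-s)\tilde{Z}_{q}(s)^{-1}\Psi_{q}(s)^{-1}$ with independently computed values of $\varphi_{q}(s)$. So your overall verdict --- that the statement cannot currently be proved and must remain heuristic --- is exactly the paper's own position, and your identification of the missing ingredients (an effective, computable rate $c(N)$ in Lemma \ref{lem:eigenval_approx}, plus quantitative separation of the eigenvalues of $\mathcal{L}_{\beta}$) is a reasonable diagnosis of why.

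One caveat about your contradiction sketch itself: even granting Claim \ref{cla:There-is-no-spurious-conv} and the Cauchy criterion, the argument does not quite close, because the hypothesis of Claim \ref{cla:no-gaps} concerns the regular list at one \emph{fixed} finite $N$, while Lemma \ref{lem:eigenval_approx} only guarantees approximants $\mu_{M,i}$ to the hypothetical missed eigenvalue $\lambda_{\beta}$ for all $M\ge N_{0}(\lambda_{\beta})$. If $N<N_{0}(\lambda_{\beta})$, the absence of an approximant to $\lambda_{\beta}$ among $\{\lambda_{N,n_{i}}\}_{i=1}^{K}$ is perfectly consistent with the lemma, so no contradiction arises at level $N$; your argument shows only that $\lambda_{\beta}$ would eventually appear in the regular lists at sufficiently high levels. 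This is in fact the same gap you name in your final paragraph (no a priori control of $N_{0}$ or $c(N)$), but it is worth being explicit that it undercuts the contradiction step itself, not merely the algorithmic selection rule. With that understood, your proposal is an honest conditional argument whose conclusion matches the paper: the claim is an assumption validated numerically, not a theorem.
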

\begin{algorithm*}
\label{alg:algorithm}Let $\delta,\epsilon>0$ and consider $N$ and
$M$ for some $M\ge N+1$. 

Step 1: Compute the two spectra $\Sigma_{\beta}^{N}=\left\{ \lambda_{N,i}\right\} _{1\le i\le\np_{N}}$
and $\Sigma_{\beta}^{M}=\left\{ \lambda_{M,j}\right\} _{1\le j\le\np_{N}}$
(both ordered with non-increasing magnitude and repeated according
to multiplicity) and the relative differences $\delta_{i,j}=\frac{\left|\lambda_{N,i}-\lambda_{M,j}\right|}{\left|\lambda_{N,i}\right|+\left|\lambda_{M,j}\right|}.$ 

Step 2: Let $k=0$ and consider in sequence each $i=1,\ldots,\np_{N}$.
If there exists a $j$ such that $\delta_{i,j}<\delta$ we assume
that $\lambda_{N,i_{k}}$ and $\lambda_{M,j_{k}}$ are approximating
some $\lambda\in\Sigma_{\beta}$ and accordingly increase $k$ by
$1$, set $i_{k}=i$, $j_{k}=j$, $\delta_{k}=\delta_{i_{k}j_{k}}$
and $\tilde{\lambda}_{\beta,k}=\lambda_{M,j_{k}}$. 

Step 3: Let $K$ denote the last value of $k$. Then $\left\{ \smash{\tilde{\lambda}_{\beta,k}}\right\} _{k=1}^{K}$
is an ordered sequence of eigenvalues believed to approximate eigenvalues
of $\mathcal{L}_{\beta}$ and we define \[
\tilde{d}_{N,M}\left(\beta\right)=\prod_{j=1}^{K}\left(1-\tilde{\lambda}_{\beta,k}\right).\]
If $\left|\smash{\tilde{\lambda}_{\beta,K}}\right|>\epsilon$ we increase
$N$ and $M$ and start from Step 1. As will be explained in section
\ref{sub:Error-analysis.} below it might also be necessary to increase
the working precision simultaneously with $N$ in this step. If $\left|\smash{\tilde{\lambda}_{\beta,K}}\right|<\epsilon$
we assume that $\tilde{d}_{N,M}\left(\beta\right)$ approximates $\det\left(1-\mathcal{L}_{\beta}\right)$
and return \[
\tilde{Z}_{q}\left(\beta\right)=\tilde{Z}_{q,N,M}\left(\beta\right)=\tilde{d}_{N,M}\left(\beta\right)\det\left(1-\mathcal{K}_{\beta}\right)^{-1}\]
as a tentative value of $Z_{q}\left(s\right)$ with an assumed error
depending only on $\delta,\epsilon$ and the working precision. The
factor $\det\left(1-\mathcal{K}_{\beta}\right)$ can be computed using
relation (\ref{eq:det(1-K)}) to any desired accuracy. 
\end{algorithm*}

\section{Discussion of the Results }

The numerical method, Algorithm \ref{alg:algorithm}, which is proposed
as a means to evaluate the Selberg zeta function relies on the heuristic
Claims \ref{cla:There-is-no-spurious-conv} and \ref{cla:no-gaps}
above. It is thus clear that no amount of internal {}``consistency
tests'', e.g. stability under change of order of approximation and
variation of the parameters $\epsilon$ and $\delta$, can certify
that the result returned by the algorithm is correct. If Claim \ref{cla:There-is-no-spurious-conv}
is wrong we would obtain extra eigenvalues not associated to $\mathcal{L}_{\beta}$
and on the other hand, if Claim \ref{cla:no-gaps} is incorrect we
might actually miss eigenvalues of comparatively large magnitude.
In both cases we would only be able to approximate $Z_{\Gamma}\left(s\right)$
times some unknown factor. 

The need of an independent test to verify the accuracy of our numerical
results is thus obvious. We propose to use a test relying on the functional
equation of $Z_{\Gamma}\left(s\right)$. The setup will be discussed
in Subsection \ref{sub:The-functional-equation}. 

\begin{rem}
If we were only concerned about zeros of $Z_{\Gamma}$ on the real
axis, i.e. eigenvalues equal to $1$ of $\mathcal{L}_{\beta}$ for
real $\beta$ much more is known about approximation of eigenvalues
and eigenfunctions, cf. e.g.~\cite{MR1679080,MR1830903}.
\end{rem}

\subsection{\label{sub:The-functional-equation}The functional equation for $Z_{q}\left(s\right)$.}

Let $Z_{q}\left(s\right)$ be the Selberg Zeta function for $G_{q}$.
We know \cite[p.\ 499]{hejhal:lnm1001} that \begin{equation}
\frac{Z_{q}\left(1-s\right)}{Z_{q}\left(s\right)}=\varphi_{q}\left(s\right)c\Psi_{q}\left(s\right),\label{eq:functional_equation}\end{equation}
where $\varphi_{q}\left(s\right)$ is the scattering matrix (here
a $1\times1$-matrix), $c=\varphi_{q}\left(\frac{1}{2}\right)=\pm1$
and \begin{eqnarray*}
\Psi_{q}\left(s\right) & = & \frac{\Gamma\left(\frac{3}{2}-s\right)}{\Gamma\left(s+\frac{1}{2}\right)}\mbox{exp}\left(-\frac{q-2}{q}\pi\int_{0}^{s-\frac{1}{2}}t\tan\left(\pi t\right)dt+\right.\\
 &  & +\pi\sum_{k=1}^{q-1}\frac{1}{q\sin\frac{k\pi}{q}}\int_{0}^{s-\frac{1}{2}}\left(\frac{e^{-\frac{2\pi ikt}{m}}}{1+e^{-2\pi it}}+\frac{e^{\frac{2\pi ikt}{m}}}{1+e^{2\pi it}}\right)dt\\
 &  & +\left.\left(1-2s\right)\ln2\right).\end{eqnarray*}
The function $\Psi_{q}\left(s\right)$ can be computed to any desired
degree of accuracy using standard methods of numerical (e.g. Gauss)
quadrature. Evaluation of $\varphi_{q}\left(s\right)$ on the other
hand is more tricky. For $q=3$ there is an explicit formula \cite[p.\ 508]{hejhal:lnm1001}
\begin{equation}
\varphi_{3}\left(s\right)=\sqrt{\pi}\frac{\Gamma\left(s-\frac{1}{2}\right)\zeta\left(2s-1\right)}{\Gamma\left(s\right)\zeta\left(2s\right)}.\label{eq:phi_3-explicit}\end{equation}
For $q\ge4$ the only explicit formula is in terms of a Dirichlet
series with abscissa of absolute convergence equal to $1$, cf. e.g.~\cite[p.\ 569]{hejhal:lnm1001}
and \cite{MR769866}. Note that for $q=4,6$ it might still be possible
to work out explicit formulas for $\varphi_{q}\left(s\right)$ using
the relations between $G_{4},\, G_{6}$ and $\Gamma_{0}\left(2\right),\,\Gamma_{0}\left(3\right)$
respectively. We do not pursue this approach and for all $q\ge4$
we use values of $\varphi_{q}\left(s\right)$ obtained by an algorithm
of Helen Avelin \cite{helen:deform_published}. The main idea of Avelin's
algorithm is that $\varphi_{q}$ occurs in the zeroth Fourier coefficient
of the Eisenstein series $E\left(s,z\right)$ for the group $G_{q}$
and one can use a method based on the $G_{q}-$invariance of $E\left(s,z\right)$
to compute its Fourier coefficients and thus also $\varphi_{q}\left(s\right)$.
This method was first introduced to compute cuspidal Maass waveforms
on Hecke triangle groups by Hejhal \cite{hejhal:99_eigenf,hejhal:calc_of_maass_cusp_forms}.
Later it was generalized to the setting of general subgroups of $\PSLZ$
\cite[Ch.\ 1]{stromberg:thesis} and finally it was generalized to
compute Eisenstein series on Fuchsian groups with one cusp by Avelin
\cite{helen:deform_published}. 

Another application of the functional equation is that we may define
a real-valued function \begin{equation}
\mathcal{Z}_{q}\left(t\right)=Z_{q}\left(\frac{1}{2}+it\right)e^{-i\Theta\left(t\right)}\label{eq:Z_q-real}\end{equation}
where $\Theta\left(t\right)=\frac{1}{2}\mbox{arg}\left(\varphi\left(\frac{1}{2}+it\right)\Psi\left(\frac{1}{2}+it\right)\right)$
and the branch of the argument is chosen so that $\mathcal{Z}_{q}\left(t\right)$
becomes continuous. Note that a single choice of a branch cut is in
general not possible because $\varphi\left(\frac{1}{2}+it\right)\Psi\left(\frac{1}{2}+it\right)$
winds around zero as $t\in\mathbb{R}^{+}$ varies. The advantage of
considering $\mathcal{Z}_{q}\left(t\right)$ is in our case purely
aesthetic, in that we may plot graphs of $\mathcal{Z}_{q}\left(s\right)$. 

It is known \cite[p.\ 498]{hejhal:lnm1001} that $Z_{q}\left(s\right)$
is zero for $s=s_{k}=\frac{1}{2}+ir_{k}$ where $\frac{1}{4}+r_{k}^{2}$
is an eigenvalue of $\Delta$ and at $s=1-\gamma$ where $\varphi_{q}\left(\gamma\right)=0$.
In Figures \ref{fig:ZQ3}-\ref{fig:ZQ5} we plot $\mathcal{Z}_{q}\left(t\right)$
together with blue vertical lines at $t=r_{k}$ and green vertical
lines at $t=\Im\gamma$. The zeros of $Z_{q}\left(s\right)$ on and
off the half-line are clearly visible as zeros and {}``dips''  of
$\mathcal{Z}_{q}\left(t\right)$ at the corresponding points. The
eigenvalues of $\Delta$ were computed by the method of Hejhal indicated
above, see e.g.~\cite{hejhal:99_eigenf,hejhal:calc_of_maass_cusp_forms,stromberg:thesis}
and zeros of $\varphi_{q}\left(s\right)$ were located using Avelin's
algorithm. 

Verification of these zeros as well as the zeros on the real axis
of $Z_{q}\left(s\right)$ (\cite[p.\ 498]{hejhal:lnm1001}) does of
course also lend credibility to our proposed algorithm but since this
verification does not tell us anything about the accuracy for general
$s$ we prefer to concentrate on the error estimate using $\varphi_{q}\left(s\right)$.

\begin{rem}
The actual value of $\varphi_{q}\left(\frac{1}{2}\right)\in\left\{ \pm1\right\} $
can be computed experimentally in two different ways. The straight-forward
way is to use Avelin's method but it is also known (cf.~\cite[p.\ 498]{hejhal:lnm1001})
that $Z_{q}\left(s\right)$ has a simple pole at $s=\frac{1}{2}$
if and only if $\varphi_{q}\left(\frac{1}{2}\right)=-1$. Experiments
performed using both methods indicate that $\varphi_{q}\left(\frac{1}{2}\right)=-1$
for all $q\ge3.$ 

In certain cases one can use the transfer operator to show that $Z_{q}\left(s\right)$
has a singularity at $s=\frac{1}{2}$ by showing that $\mathcal{L}_{\beta}$
has an eigenvalue $\mu_{\beta}\sim\frac{1}{\lambda\left|\beta-\frac{1}{2}\right|}$
in a neighborhood of $\beta=\frac{1}{2}$, but it is not possible
to exclude that this singularity in $\det\left(1-\mathcal{L}_{\beta}\right)$
is canceled by the appearance of an eigenvalue $=1$ for $\mathcal{L}_{\frac{1}{2}}$. 
\end{rem}

\subsection{\label{sub:Error-analysis.}Discussion of data and error analysis.}

The procedure for testing and producing error estimates of the proposed
algorithm to compute $Z_{q}\left(s\right)$ is now clear. Given tentative
values of $Z_{q}\left(s\right)$ and $Z_{q}\left(1-s\right)$, denoted
by $\tilde{Z}_{q}\left(s\right)$ and $\tilde{Z}_{q}\left(s\right)$
we compute the quantity \[
\tilde{\varphi}_{q}\left(s\right)=\tilde{Z}_{q}\left(1-s\right)\tilde{Z}_{q}\left(s\right)^{-1}\Psi\left(s\right)^{-1}\]
and compare this with the value of $c\,\varphi_{q}\left(s\right)$
obtained as described above (in all cases considered here we have
$c=1$). The difference $\left|\varphi_{q}\left(s\right)-\tilde{\varphi}_{q}\left(s\right)\right|$
or relative difference in the neighborhood of a zero of $\varphi\left(s\right)$
gives an estimate of the accuracy of the values $\tilde{Z}_{q}\left(s\right)$
and $\tilde{Z}_{q}\left(1-s\right)$. 

To confirm a value $\tilde{Z}_{q}\left(s\right)$ we thus need also
to compute $\tilde{Z}_{q}\left(1-s\right),$ but on the critical line
with $s=\frac{1}{2}+it$ we have $Z_{q}\left(1-s\right)=\overline{Z_{q}\left(s\right)}$
so we need only compute $\tilde{Z}_{q}\left(s\right)$. 

Using this {}``$\varphi$-test'' we may verify the correctness of
Claims \ref{cla:There-is-no-spurious-conv} and \ref{cla:no-gaps}.
As it turns out, these two claims seems to be correct in theory. In
practice, however, they and the entire algorithm may fail unless the
working precision is increased as necessary. This phenomenon is clearly
visible in Table \ref{tab:CompDN} where we investigate the case $q=3$
and $s=\frac{1}{2}+5i$ using different degrees of approximation $N$
(here $M=N+3$ always) and working precision $WP$. In this table
we list the estimated error, $\left|\tilde{\varphi}_{3}\left(s\right)-\varphi_{3}\left(s\right)\right|,$
the time it took to compute $Z_{3}\left(s\right)$ in seconds, the
number of eigenvalues of $\mathcal{L}_{\beta}$\emph{ }which were
used in the computation, the size of the smallest of those eigenvalues
and the maximum of differences between eigenvalues of $\mathcal{A}^{\left(N,N\right)}$
and $\mathcal{A}^{\left(M,M\right)}$. With working precision of $50$
digits we see that the error decreases as $N=25,$$50$ and $75$.
To further increase $N$ up to $100$ does not improve the accuracy
and increasing $N$ up to $200$ actually results in a worse approximation
than at $N=25$. The reason for this phenomenon is that Claim \ref{cla:There-is-no-spurious-conv}
is violated due to an increasing number of spurious eigenvalues and
in particular there appear spurious eigenvalues which do not vary
fast with $N$. This problem can be overcome by increasing the working
precision, which is demonstrated in the remainder of the table, where
the precision has been increased to $WP=100,$ $150$ and $200$ digits
respectively. To know a priori when the precision has to be increased
one must study more closely the relative differences $\delta_{k}$.
For example, in the case $WP=50$ and $N=200$, the relative differences
for the spurious eigenvalues of a certain magnitude are much larger
than the relative differences of regular eigenvalues. If one sees
such a break from the otherwise almost monotonously increasing $\delta_{k}$
it is a clear sign to increase the working precision. What is not
visible in this table, is that the need for increase in precision
is actually dependent on the matrix size $\np_{N}=2\np\left(N+1\right)$
and not only on $N$.

Table \ref{tab:phi3} contains values of $\tilde{\varphi}_{3}\left(\frac{1}{2}+ni\right),$
$1\le n\le10$, computed using $N=100,\, M=103,$ $\delta=10^{-7}$
and 100 digits working precision. The third column contains the true
error, i.e. $\tilde{\varphi}_{3}$ compared to the explicit formula
(\ref{eq:phi_3-explicit}) for $\varphi_{3}$. One can see that in
this case the true error agrees well with the error estimate in the
fourth column given by the absolute value of the smallest eigenvalue
used in computing $\tilde{d}\left(s\right)$. The fifth column contains
the difference between $\varphi_{3}\left(s\right)$ computed by Avelin's
method (using double precision) and by the explicit formula. 

In Table \ref{tab:phi4} we list values $\varphi_{4}^{A}\left(s\right)$
of $\varphi_{4}\left(\frac{1}{2}+ni\right),$ for $1\le n\le10$ given
by Avelin's algorithm and of $\tilde{\varphi}_{4}\left(s\right)$
by our algorithm using $N=100,\, M=103,$ $\delta=10^{-7}$ and 100
digits precision. The fourth column contains the difference between
these values and the fifth column contains the size of the last eigenvalue
$\tilde{\lambda}_{K}$ used in the evaluation of $\tilde{d}_{NM}\left(s\right)$. 

Comparing the values of $\left|\tilde{\lambda}_{K}\right|$ in Tables
\ref{tab:phi3} and \ref{tab:phi4} we observe that the eigenvalues
of $\mathcal{L}_{\beta}$ for $q=3$ seem to decay more rapidly than
for $q=4$. We would also expect the errors in the tabulated approximations
of $\varphi_{4}\left(s\right)$ to be greater than those of $\varphi_{3}\left(s\right)$
even though we use the same level of precision and approximation.
However, there is no reason to believe that the error in $\varphi_{4}^{A}$
is any worse than in $\varphi_{3}^{A}$. Moreover it is very unlikely
that the values $\varphi_{4}^{A}\left(s\right)$ should agree with
our values $\tilde{\varphi}_{4}\left(s\right)$ to a much larger degree
than the true accuracy, cf.~e.g.~$s=\frac{1}{2}+9i$ where $|\tilde{\lambda}_{K}|=2\cdot10^{-10}$
but $|\varphi_{4}^{A}\left(s\right)-\tilde{\varphi}_{4}\left(s\right)|=2\cdot10^{-15}$.
We conclude that the values of $\left|\tilde{\lambda}_{K}\right|$
do not give an accurate estimate of the true magnitude of the error
for $q=4$ but that they still provide us with an upper bound. 

The final conclusion we can draw from Tables \ref{tab:CompDN}- \ref{tab:phi4}
is that the value of $|\tilde{\lambda}_{K}|$ alone is not enough
to estimate the error in $\tilde{Z}_{q}\left(s\right)$ unless the
working precision is high enough. To completely eliminate the external
test by using $\varphi_{q}$ to confirm values produced by our algorithm
one needs a better understanding of when it is necessary to increase
the working precision.

It is clear, that high precision eigenvalue computations are very
time consuming. To evaluate $Z_{q}\left(s\right)$ to a fixed precision
it is necessary to increase the approximation level $N$ as $\Im s$
grows and this forces a simultaneous increase in the working precision.
Altogether this makes it very time consuming to compute values of
$Z_{q}\left(\frac{1}{2}+it\right)$ for large $t$'s and to reach
even values of $t\approx1000$ for $q=3$ seem to be out of reach
with current methods and hardware. Remember that the size of the matrix
$\np_{N}$ grows with $q$, so similar problems arise when computing
$Z_{q}\left(s\right)$ for large $q$'s. To end this discussion I
would like to give a feeling of the necessary CPU-times. 

To compute $Z_{q}\left(\frac{1}{2}+it\right)$ with an estimated error
of $10^{-5}$ for $q=3$ takes 12 seconds at $t=15$ and 30 minutes
at $t=100$. Decreasing the error to $10^{-9}$ at $t=15$ only increases
the time to 26 seconds. For $q=8$, to compute $Z_{8}\left(\frac{1}{2}+15i\right)$
with an estimated error of $10^{-4}$ takes 5 hours and 57 minutes.
If these figures seem outrageous, remember that for $q=8$ the size
of $\left|\mathcal{J}_{\np}\right|=6$ and in this case $N=190$ so
$\np_{N}=1146$ compared to $\np_{N}=352$ for $q=3$ at $t=100$.
We see that $\frac{1146}{352}\approx3.25$ and $\frac{\mbox{5h57m}}{\mbox{30m}}=\frac{21412}{1800}\approx11.9\approx3.45^{2}$
so the CPU-time increases roughly like the square of the size of the
approximating matrix, which is to be expected from the eigenvalue
computations.

\subsection{Implementation}

The algorithm outlined on p.~\pageref{alg:algorithm} above has been
implemented in Fortran 90 using the ARPREC \cite{arprec} library
for arbitrary precision computations. It was also necessary to write
arbitrary precision Riemann and Hurwitz zeta functions as well as
an arbitrary precision version of the standard linear algebra system
LAPACK. Fortran 90 codes can be made available from the author upon
request.

For the interested reader who is not comfortable with Fortran there
is a version of the algorithm implemented in MuPAD \cite{mupad} and
this version is available from the homepage of the author. The choice
of MuPAD is mostly because of its good multi-precision linear algebra
capabilities. 

Avelin's algorithm is currently only implemented in double precision
FORTRAN 77 hence the MuPAD version only contains the complete error
check using $\varphi_{q}\left(s\right)$ for $q=3$. 

\begin{landscape}\centering
\begin{table}
\caption[]{Demonstrating interplay between working precision and level of approximation in computing $Z_3(s)$}
\label{tab:CompDN}
\begin{tabular}[t]{r|r|d{55}|l|r|r|l|l}
$WP$ & $N$ & \multicolumn{1}{c}{$Z_{3}\left(\frac{1}{2}+5\text{i}\right)$} & 
\multicolumn{1}{c}{$|\tilde{\varphi}-\varphi|$}	 & Time (s) & $K$ & 
\multicolumn{1}{c}{$|\tilde{\lambda}_{K}|$} & $\max_{k} \delta_{k}$  \\
\hline\noalign{\smallskip}
50 &  25 & 1.1954+0.0811i & $1\cdot 10^{-2}$ & 12 & 3 & $5\cdot 10^{-2}$ &  $8\cdot 10^{-9}$\\
   &  50 & 1.192213397499979+            0.074413721696096i & $ 5\cdot 10^{-9} $ & 65 & 12 & $2\cdot 10^{-8}$ &  $5\cdot10^{-8}$\\
   &  75 & 1.192213402687674941183+      0.07441372136992775i   & $7 \cdot 10^{-13}$ & 202& 17 & $3\cdot 10^{-12}$ & $2\cdot 10^{-8}$ \\
   &  100& 1.192213402687674941270+      0.074413721369927750i   & $7\cdot 10^{-13}$ & 839& 19 & $ 3\cdot 10^{-12}$ & $2\cdot 10^{-8}$ \\
   &  200& 1.0165+0.0782i & $3\cdot 10^{-2}$ & 3764 &  19 & $4\cdot 10^{-9}$ & $5\cdot 10^{-8}$ \\
100&100 & 1.192213402686855883038325+   0.0744137213702737315168i& $4\cdot 10^{-19}$ & 599&  26 & $4\cdot 10^{-18}$ & $2\cdot 10^{-10}$\\
   &200 & 1.192213402686855883047193363+& $3\cdot 10^{-25}$& 4324& 34 & $ 1\cdot 10^{-24}$ & $2\cdot 10^{-8}$\\
   &    & 0.0744137213702737317790183373i & & & &\\
   &250 & 1.1837+0.0575i & $3\cdot 10^{-2}$ &10164& 27 & $4\cdot10^{-18}$ & $3\cdot 10^{-8}$ \\
150   &200 &
   1.192213402686855883047193551130117623672+&
   $1\cdot 10^{-36}$ & 6637 & 50 &
    $5\cdot 10^{-36}$ & $1\cdot 10^{-9}$\\
   &    & 0.07441372137027373177901851156938182265i & & & & \\
   &250 & 
   1.192213402686855883047193551130117623672+&
   $1\cdot 10^{-36}$ &    12847 & 50 &
   $5\cdot 10^{-36}$ & $2\cdot 10^{-9}$ \\
   &  & 0.07441372137027373177901851156938182265i & & & & \\
200 & 250 & 1.192213402686855883047193551130117621955253934465021290+
   &$ 6\cdot 10^{-51}$ & 18788 & 68 & $7\cdot 10^{-49}$ & $8\cdot 10^{-8}$ \\
   &   & 0.074413721370273731779018511569381823164472321153746259i &  &  & &\\
\end{tabular}
\end{table}
\end{landscape}

\begin{landscape}\centering
\begin{table}
\centering
\caption{Comparing values of $\varphi_3(s)$ together with different error estimates ($N=100$, $WP=100$, $\delta=10^{-7}$).}
\label{tab:phi3}
\begin{tabular}[t]{d{1}|d{46}|c|c|c|l	|c}
\multicolumn{1}{c}{$n$} & 
\multicolumn{1}{c}{$\tilde{\varphi}_{3}(s)=\tilde{Z}_{3}(1-s)/\tilde{Z}_{3}(s)/\Psi_{3}(s),\quad s=\frac{1}{2}+ni$} & 
\multicolumn{1}{c}{$\left|\tilde{\varphi}_{3}-\varphi_3\right|$} &
\multicolumn{1}{c}{$K$} &
\multicolumn{1}{c}{$\left| \tilde{\lambda}_{K} \right|$} &
\multicolumn{1}{c}{$\max\delta_{k}$} &
\multicolumn{1}{c}{$\left|\varphi^{A}_{3}-\varphi_{3}\right|$} 
\\
\hline\noalign{\smallskip}
1 &

0.523127151694381217718-0.852254646898521675788i&
$1\cdot10^{-20}$ &
27 &
$6\cdot10^{-21}$&
$8\cdot 10^{-9}$ &
$8\cdot10^{-17}$
\\
2 &

0.777709870863430801402-0.628623382289893657301i&
$3\cdot10^{-21}$&
 24 &
$1\cdot10^{-20}$&
$3\cdot 10^{-8}$ &
$1\cdot10^{-16}$
\\
3 &
0.810307536439650550895-0.586004860380103327530i&
$1\cdot10^{-20}$&
 24 &
$9\cdot10^{-21}$&
$2\cdot 10^{-8}$ &
$4\cdot10^{-16}$
\\
4 &
0.784116026298143660937-0.620614258056007668073i&
$4\cdot10^{-20}$&
27 &
$1\cdot10^{-19}$&
$9\cdot 10^{-8}$ &
$6\cdot10^{-16}$
\\	
5 &
0.620614258056007668073-0.709907649199078141317i&
$4\cdot10^{-19}$&
26 &
$4\cdot10^{-18}$&
$2\cdot 10^{-10}$ &
$7\cdot10^{-16}$
\\
6 &
0.473769476721985155012-0.880648898782356035905i&
$4\cdot10^{-20}$&
26 &
$2\cdot10^{-18}$&
$2\cdot 10^{-9}$ &
$1\cdot10^{-15}$
\\
7 &
-0.982666838048427466635+0.185380380299279850669i&
$2\cdot10^{-19}$&
26 &
$1\cdot10^{-18}$&
$9\cdot 10^{-9}$ &
$1\cdot10^{-14}$
\\
8 &

0.947280945444430850195-0.320404136049934947281i&
$3\cdot10^{-19}$&
26 &
$2\cdot10^{-18}$&
$1\cdot 10^{-8}$ &
$2\cdot10^{-15}$
\\
9 &
0.678702274737248216706-0.734413522660418366220i&
$1\cdot10^{-18}$&
26 &
$3\cdot10^{-18}$&
$1\cdot 10^{-8}$ &
$1\cdot10^{-15}$
\\
10 &
-0.063355766687361081600-0.997991005384044865561i&   
$2\cdot10^{-18}$&
26 &
$5\cdot10^{-18}$&
$2\cdot 10^{-8}$ &
$5\cdot10^{-15}$
\\
\end{tabular}
\end{table}
\begin{table}\label{tab:phi4}
\centering
\caption{Comparing values of $\varphi_4(s)$ ($N=100$, $WP=100$, $\delta=10^{-7}$).}
\begin{tabular}[t]{c|d{36}|d{36}|c|c|c|c}
\multicolumn{1}{c}{$n$} & 
\multicolumn{1}{c}{$\varphi^{A}_{4}\left(\frac{1}{2}+ni\right)$} &
\multicolumn{1}{c}{$\tilde{\varphi}_{4}(s)=\tilde{Z}_{4}(1-s)/\tilde{Z}_{4}(s)/\Psi_{4}(s)$} & 
\multicolumn{1}{c}{$\left| \varphi^{A}_{4}-\tilde{\varphi}_{4} \right|$} &
\multicolumn{1}{c}{$K$} &
\multicolumn{1}{c}{$\left| \tilde{\lambda}_{K} \right|$} & 
\multicolumn{1}{c}{$\max\delta_k $} \\
\hline\noalign{\smallskip}
1 &
 -0.2632601861373177 -0.9647248697918721i & 
 -0.2632601861373176 -0.9647248697918723i &
$2\cdot 10^{-16}$ &
27 &
$5\cdot 10^{-14}$ &
$7\cdot 10^{-8}$
\\
2 &
 -0.7021440712594831 -0.7120349030736887i &
 -0.7021440712594827 
-0.7120349030736895i
& $9\cdot 10^{-16}$ &
25 &
$3\cdot 10^{-13}$ &
$5\cdot 10^{-8}$
\\
3 &
-0.9912520623526865  +0.1319823809511951i &
-0.9912520623526863
+0.1319823809511939i
& $1\cdot 10^{-15}$ 
& 24 
& $6\cdot 10^{-12}$ 
& $5\cdot 10^{-9}$
\\
4 &
 0.2148427612152942 + 0.9766486512320531i&
 0.2148427612152920
+0.9766486512320534i
& $2\cdot 10^{-15}$&
26 &
 $3\cdot 10^{-13}$&
 $8\cdot 10^{-8}$
\\
5 &
-0.8749676464424498  -0.4841813892323421i&
-0.8749676464424484
-0.4841813892323440i
& $2\cdot 10^{-15}$
& 24
& $3\cdot 10^{-12}$
& $3\cdot 10^{-8}$
\\
6 &
-0.0732387210885128 +0.9973144387470341i&
-0.0732387210885146
+0.9973144387470377i
& $2\cdot 10^{-15}$
& 24
& $2\cdot 10^{-11}$
& $1\cdot 10^{-8}$
\\
7 &
-0.0299908075389591  -0.9995501745601175i&
-0.0299908075389498
-0.9995501745601176i
& $9\cdot 10^{-15}$
& 24
&  $4\cdot 10^{-11}$
&  $3\cdot 10^{-8}$
\\
8 &
0.8554598916720125 + 0.5178690700751991i&
 0.8554598916721716
+0.5178690700748949i
& $3\cdot 10^{-13}$
& 24
& $1\cdot 10^{-10}$
& $1\cdot 10^{-8}$
\\
9 &
0.7163471899280185  -0.6977440099938026i&
0.7163471899280196
-0.6977440099938012i
& $2\cdot 10^{-15}$
& 24
& $2\cdot 10^{-10}$
& $7\cdot 10^{-8}$
\\
10 &
-0.7358033312663973  -0.6771952877104747i&
-0.7358033312663925
-0.6771952877104797i
& $7\cdot 10^{-15}$
& 24
& $3\cdot 10^{-10}$
& $1\cdot 10^{-7}$
\\
\end{tabular}
\end{table}
\end{landscape}

\begin{figure}

\caption{$\mathcal{Z}_{3}\left(t\right)$\label{fig:ZQ3}}
\includegraphics{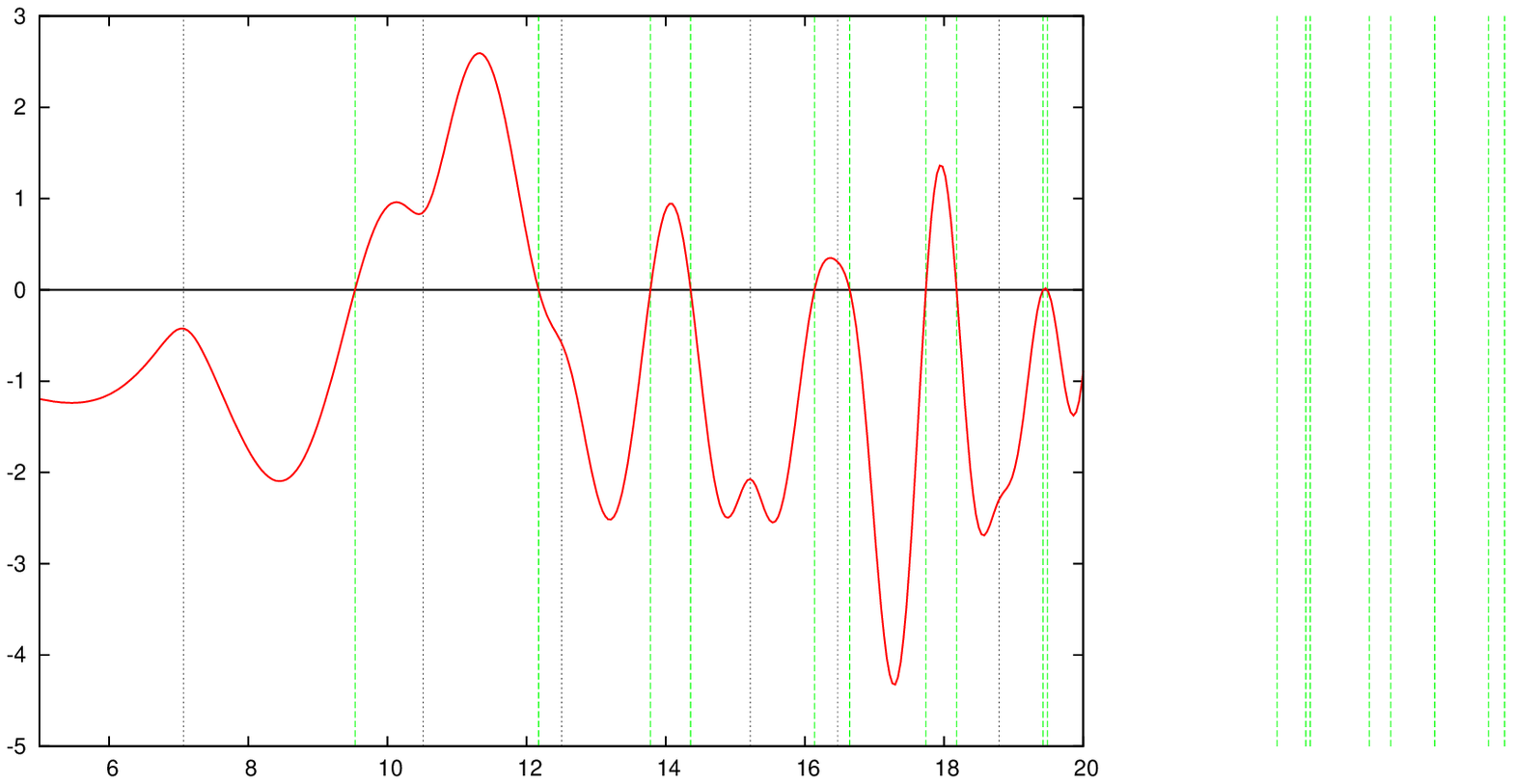}
\end{figure}
\begin{figure}
\caption{$\mathcal{Z}_{4}\left(t\right)$\label{fig:ZQ4}}
\includegraphics{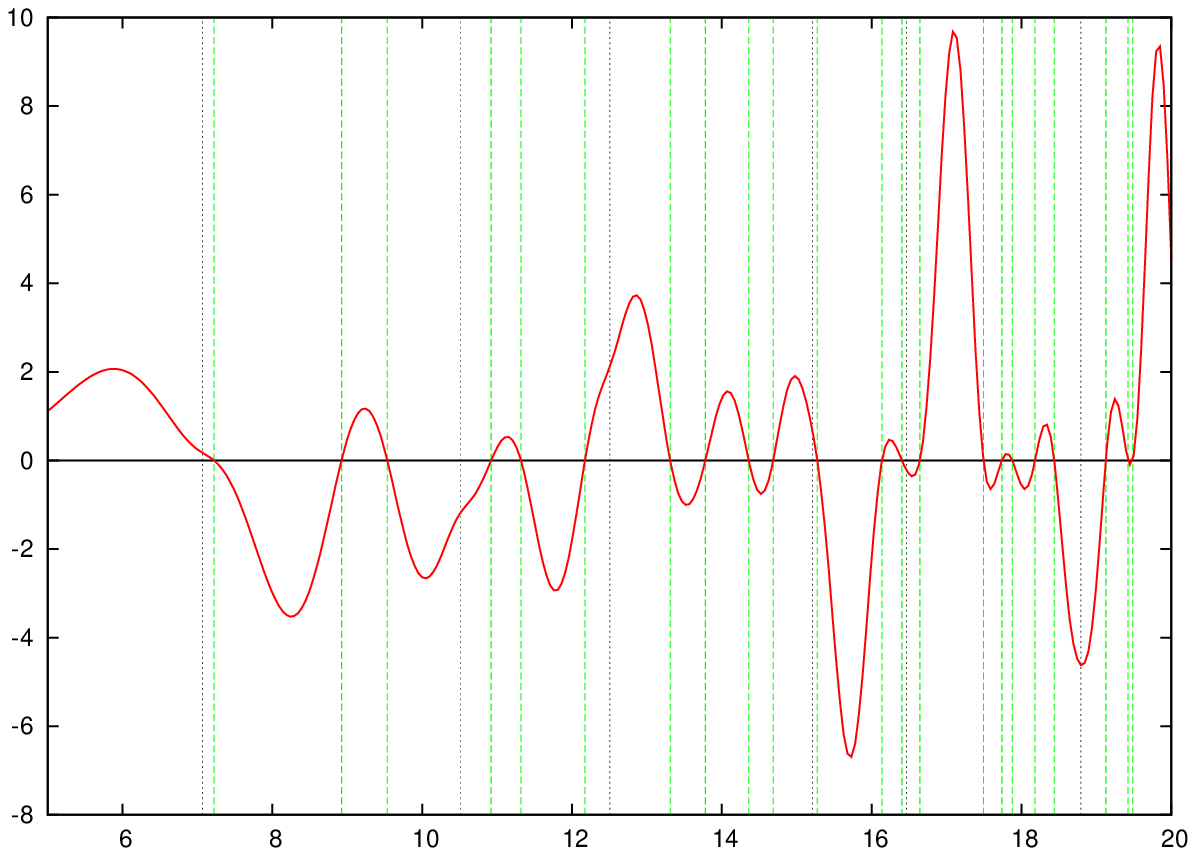}
\end{figure}
\begin{figure}
\caption{$\mathcal{Z}_{5}\left(t\right)$\label{fig:ZQ5}}
\includegraphics{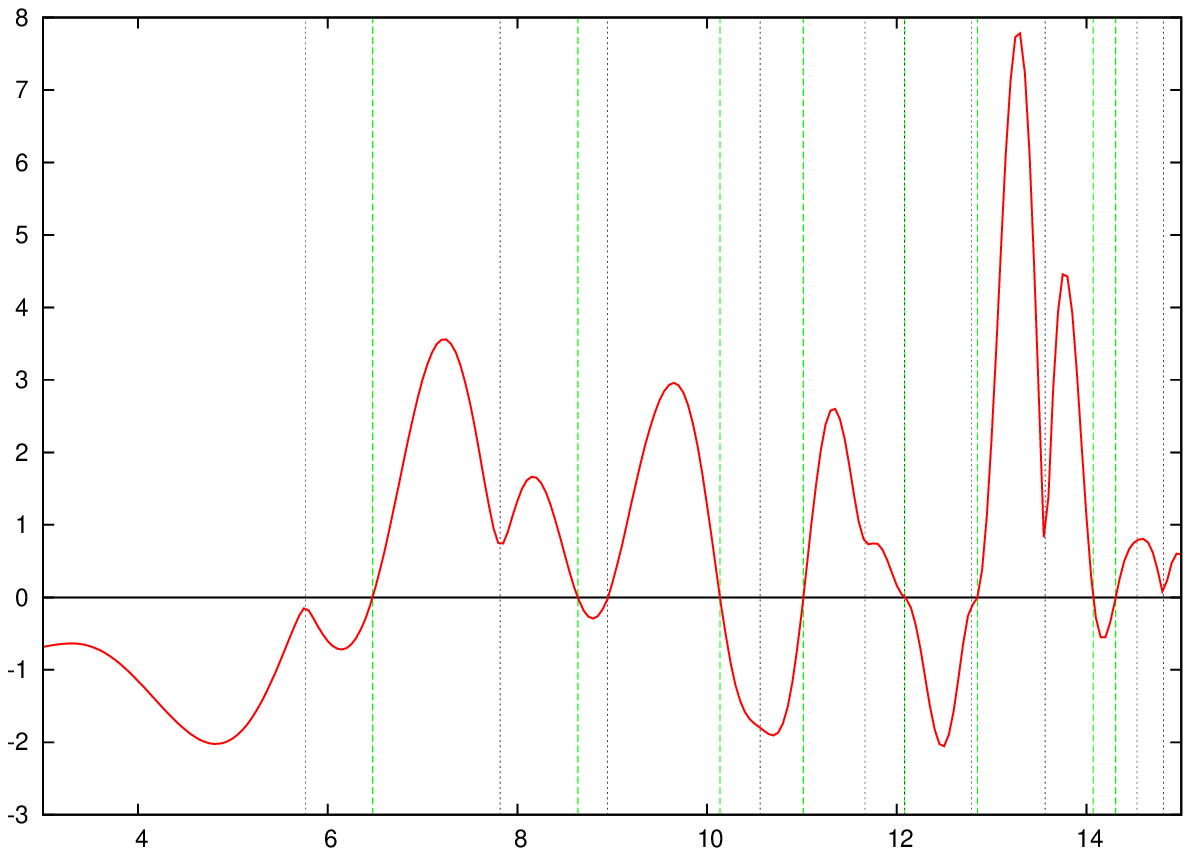}
\end{figure}

\bibliographystyle{amsplain}
\bibliography{/home/fredrik/Documents/matematik/refs}

\providecommand{\bysame}{\leavevmode\hbox to3em{\hrulefill}\thinspace}
\providecommand{\MR}{\relax\ifhmode\unskip\space\fi MR }
\providecommand{\MRhref}[2]{%
  \href{http://www.ams.org/mathscinet-getitem?mr=#1}{#2}
}
\providecommand{\href}[2]{#2}
\begin{thebibliography}{10}

\bibitem{helen:deform_published}
H.~Avelin, \emph{Deformation of {$\Gamma\sb 0(5)$}-cusp forms}, Math. Comp.
  \textbf{76} (2007), no.~257, 361--384 (electronic). \MR{MR2261026}

\bibitem{arprec}
D.H. Bailey, Y.~Hida, Jeyabalan K., X.~S. Li, and Thompson B.,
  \emph{{AR}bitrary {PREC}icion {C}omputation {P}ackage}, 2008, Available at
  http://crd.lbl.gov/~dhbailey/mpdist/.

\bibitem{MR1690191}
V.~Baladi and M.~Holschneider, \emph{Approximation of nonessential spectrum of
  transfer operators}, Nonlinearity \textbf{12} (1999), no.~3, 525--538.
  \MR{MR1690191 (2000h:47008)}

\bibitem{MR2350441}
O.~F. Bandtlow and O.~Jenkinson, \emph{Explicit a priori bounds on transfer
  operator eigenvalues}, Comm. Math. Phys. \textbf{276} (2007), no.~3,
  901--905. \MR{MR2350441}

\bibitem{B-J_explicit_ev_est}
\bysame, \emph{Explicit eigenvalue estimates for transfer operators acting on
  spaces of holomorphic functions}, Adv. Math. \textbf{118} (2008), no.~3,
  902--925.

\bibitem{MR1789478}
C.-H. Chang and D.~Mayer, \emph{Thermodynamic formalism and {S}elberg's zeta
  function for modular groups}, Regul. Chaotic Dyn. \textbf{5} (2000), no.~3,
  281--312. \MR{MR1789478 (2001k:37045)}

\bibitem{MR1691529}
C.-H. Chang and D.~H. Mayer, \emph{The transfer operator approach to
  {S}elberg's zeta function and modular and {M}aass wave forms for {${\rm
  PSL}(2,{\bf Z})$}}, Emerging applications of number theory (Minneapolis, MN,
  1996), IMA Vol. Math. Appl., vol. 109, Springer, New York, 1999, pp.~73--141.
  \MR{MR1691529 (2000e:11112)}

\bibitem{MR0075539}
A.~Grothendieck, \emph{Produits tensoriels topologiques et espaces
  nucl\'eaires}, Mem. Amer. Math. Soc. \textbf{1955} (1955), no.~16, 140.
  \MR{MR0075539 (17,763c)}

\bibitem{MR2036371}
L.~Guillop{\'e}, K.~K. Lin, and M.~Zworski, \emph{The {S}elberg zeta function
  for convex co-compact {S}chottky groups}, Comm. Math. Phys. \textbf{245}
  (2004), no.~1, 149--176. \MR{MR2036371 (2005f:11193)}

\bibitem{hejhal:76}
D.~A. Hejhal, \emph{The selberg trace formula and the riemann zeta function},
  Duke Math. J. \textbf{43} (1976), no.~3.

\bibitem{hejhal:lnm548}
\bysame, \emph{The {S}elberg {T}race {F}ormula for \rm {PSL}(2,$\mathbb{R}$),
  \em {V}ol.1}, Lecture Notes in Mathematics, vol. 548, Springer-Verlag, 1976.

\bibitem{hejhal:lnm1001}
\bysame, \emph{The {S}elberg {T}race {F}ormula for \rm {PSL}(2,$\mathbb{R}$),
  \em {V}ol.2}, Lecture Notes in Mathematics, vol. 1001, Springer-Verlag, 1983.

\bibitem{hejhal:99_eigenf}
\bysame, \emph{On eigenfunctions of the {L}aplacian for {H}ecke triangle
  groups}, Emerging applications of number theory (D.~Hejhal, J.~Friedman,
  et~al., eds.), IMA Vol. Math. Appl., vol. 109, Springer, New York, 1999,
  pp.~291--315.

\bibitem{hejhal:calc_of_maass_cusp_forms}
\bysame, \emph{On the {C}alculation of {M}aass {C}usp {F}orms}, Proceedings of
  the "International School on Mathematical Aspects of Quantum Chaos II",
  Lecture Notes in Physics, Springer, 2004, to appear.

\bibitem{MR1974398}
O.~Jenkinson, L.~F. Gonzalez, and M.~Urba{\'n}ski, \emph{On transfer operators
  for continued fractions with restricted digits}, Proc. London Math. Soc. (3)
  \textbf{86} (2003), no.~3, 755--778. \MR{MR1974398 (2004d:37032)}

\bibitem{MR1902887}
O.~Jenkinson and M.~Pollicott, \emph{Calculating {H}ausdorff dimensions of
  {J}ulia sets and {K}leinian limit sets}, Amer. J. Math. \textbf{124} (2002),
  no.~3, 495--545. \MR{MR1902887 (2003c:37064)}

\bibitem{MR1679080}
G.~Keller and C.~Liverani, \emph{Stability of the spectrum for transfer
  operators}, Ann. Scuola Norm. Sup. Pisa Cl. Sci. (4) \textbf{28} (1999),
  no.~1, 141--152. \MR{MR1679080 (2000b:47030)}

\bibitem{mupad}
SciFace Software GmbH \&~Co. KG, \emph{Mu{PAD}}, 2008, Available at
  http://www.mupad.com/.

\bibitem{MR0164033}
J.~Lehner, \emph{Discontinuous groups and automorphic functions}, Mathematical
  Surveys, No. VIII, American Mathematical Society, Providence, R.I., 1964.
  \MR{MR0164033 (29 \#1332)}

\bibitem{MR1830903}
C.~Liverani, \emph{Rigorous numerical investigation of the statistical
  properties of piecewise expanding maps. {A} feasibility study}, Nonlinearity
  \textbf{14} (2001), no.~3, 463--490. \MR{MR1830903 (2002b:37128)}

\bibitem{MR1141108}
C.~Matthies and F.~Steiner, \emph{Selberg's {$\zeta$} function and the
  quantization of chaos}, Phys. Rev. A (3) \textbf{44} (1991), no.~12,
  R7877--R7880. \MR{MR1141108 (92k:81042)}

\bibitem{cf_main}
D.~Mayer, T.~M{\"u}hlenbruch, and F.~Str{\"o}mberg, \emph{Nearest
  $\lambda$-multiple fractions}, In preparation, 2008.

\bibitem{cf_transferoperator}
\bysame, \emph{On transfer operators for the geodesic flow on hecke triangle
  surfaces}, In preparation, 2008.

\bibitem{MR0418168}
D.~H. Mayer, \emph{On a {$\zeta $} function related to the continued fraction
  transformation}, Bull. Soc. Math. France \textbf{104} (1976), no.~2,
  195--203. \MR{MR0418168 (54 \#6210)}

\bibitem{MR1059321}
Dieter~H. Mayer, \emph{On the thermodynamic formalism for the {G}auss map},
  Comm. Math. Phys. \textbf{130} (1990), no.~2, 311--333. \MR{MR1059321
  (91g:58216)}

\bibitem{MR0420720}
D.~Ruelle, \emph{Zeta-functions for expanding maps and {A}nosov flows}, Invent.
  Math. \textbf{34} (1976), no.~3, 231--242. \MR{MR0420720 (54 \#8732)}

\bibitem{MR0088511}
A.~Selberg, \emph{Harmonic analysis and discontinuous groups in weakly
  symmetric {R}iemannian spaces with applications to {D}irichlet series}, J.
  Indian Math. Soc. (N.S.) \textbf{20} (1956), 47--87. \MR{MR0088511 (19,531g)}

\bibitem{stromberg:thesis}
F.~Str\"omberg, \emph{Computational aspects of maass waveforms}, Ph.D. thesis,
  Uppsala University, 2004.

\bibitem{symbolic_dynamics}
F.~Str{\"o}mberg, \emph{Symbolic dynamics for the geodesic flow on hecke
  surfaces}, Preprint, 2008.

\bibitem{MR769866}
A.~B. Venkov, \emph{The automorphic scattering matrix for the {H}ecke group
  {$\Gamma(2\,{\rm cos}(\pi/q))$}}, Trudy Mat. Inst. Steklov. \textbf{163}
  (1984), 32--36, International conference on analytical methods in number
  theory and analysis (Moscow, 1981). \MR{MR769866 (86e:11041)}

\bibitem{MR0053152}
A.~Weil, \emph{Sur les ``formules explicites'' de la th\'eorie des nombres
  premiers}, Comm. S\'em. Math. Univ. Lund [Medd. Lunds Univ. Mat. Sem.]
  \textbf{1952} (1952), no.~Tome Supplementaire, 252--265. \MR{MR0053152
  (14,727e)}

\end{thebibliography}

\end{document}